\documentclass[preprint,12pt]{elsarticle}

\usepackage{amssymb}
\usepackage{amsmath}
\usepackage{amsfonts}
\usepackage{graphicx}
\usepackage{graphics}
\usepackage{tikz}
\usepackage{titlesec}

\newtheorem{theorem}{Theorem}

\newtheorem{corollary}[theorem]{Corollary}

\newtheorem{definition}[theorem]{Definition}
\newtheorem{example}[theorem]{Example}

\newtheorem{lemma}[theorem]{Lemma}

\newtheorem{problem}[theorem]{Problem}
\newtheorem{proposition}[theorem]{Proposition}
\newtheorem{remark}[theorem]{Remark}

\def\qed{\vbox{\hrule
 \hbox{\vrule\hbox to 5pt{\vbox to 8pt{\vfil}\hfil}\vrule}\hrule}}

\usepackage{tikz}
\usepackage{tkz-graph}

%%%%%%%%%%%%%%%%%%%%%%%%%%%%%%%%%%%%%%%

\journal{xxxxxxx}

\begin{document}

\begin{frontmatter}

%Block matrices and Guo's Index for block circulant matrices with circulant blocks

\title{Block matrices and Guo's Index for block circulant matrices with circulant blocks}
%\title{NIEP via circulant and block circulant matrices}

\author[]{Enide Andrade\corref{cor1}}

\address{CIDMA-Center for Research and Development in Mathematics and Applications
         Departamento de Matem\'atica, Universidade de Aveiro, 3810-193, Aveiro, Portugal.}
\ead{enide@ua.pt}
\cortext[cor1]{Corresponding author}

\author{Cristina Manzaneda}

\address{Departamento de Matem\'{a}ticas, Facultad de Ciencias. Universidad Cat\'{o}lica del Norte. Av. Angamos 0610 Antofagasta, Chile.}
\ead{cmanzaneda@ucn.cl}

\author{Hans Nina}

\address{Departamento de Matem\'{a}ticas, Facultad de Ciencias B\'asicas. Universidad de Antofagasta.\\
Av. Angamos 601 Antofagasta, Chile.}
\ead{Hans.nina@uantof.cl}

\author{Mar\'{i}a Robbiano}

\address{Departamento de Matem\'{a}ticas, Facultad de Ciencias. Universidad Cat\'{o}lica del Norte. Av. Angamos 0610 Antofagasta, Chile.}
\ead{mrobbiano@ucn.cl}

\begin{abstract}

In this paper we deal with  circulant and partitioned into $n$-by-$n$ circulant blocks matrices and introduce spectral results concerning this class of matrices. The problem of finding lists of complex numbers corresponding to a set of eigenvalues of a nonnegative block matrix with circulant blocks is treated. Along the paper we call realizable list if its elements are the eigenvalues of a nonnegative matrix.
%Using the previous spectral characterizations a sufficient
%and sufficient
%condition for realizability of some sets is given.
The Guo's index $\lambda_0$ of a realizable list is the minimum spectral radius such that the list (up to the initial spectral radius) together with $\lambda_0$
%the index
is realizable.
%The Guo's index of a realizable list is the minimum spectral radius such that the list except by the initial spectral radius together with the index %is realizable.
%Moreover,
The Guo's index of block circulant matrices with circulant blocks is obtained,
%some class of matrices is studied
and in consequence, necessary and sufficient conditions concerning the NIEP, Nonnegative Inverse Eigenvalue Problem, for the realizability of some spectra are given.
\end{abstract}

\begin{keyword}
Inverse eigenvalue problem; Structured inverse eigenvalue problem; Nonnegative matrix; Circulant matrix; Block circulant matrix; Guo index
\MSC 15A18, 15A29, 15B99.
\end{keyword}

\end{frontmatter}
\section{A brief review and some tools}
In this section we present a brief resume related %with  the
to nonnegative inverse eigenvalue problem (NIEP).  Recall that a square matrix $A=(a_{ij})$ is nonnegative ($A \geq 0$) if and only if $a_{ij} \geq 0,$ for each $ i,j= 1, \ldots,n.$ For more background material on nonnegative matrices see for example \cite{Berman}.
The NIEP is the problem of determining necessary and sufficient conditions for a list of complex
numbers
%\begin{equation}
%\sigma =\left( \lambda _{1},\lambda _{2},\ldots ,\lambda _{n}\right)
%\label{list}
%\end{equation}
to be the spectrum of an $n$-by-$n$ %entrywise
nonnegative matrix $A$. If a list $\sigma$ is the
spectrum of a nonnegative matrix $A$, then $\sigma$ is \textit{realizable} and the matrix $A$ \textit{realizes} $\sigma$ (or, that is a realizing matrix for the list). This is a hard problem and it is considered by many authors since more than 50 years ago. This problem was firstly considered by Sule\u{\i}manova \cite{SLMNva} in $1949$.
Many partial results were found but the problem is still unsolved for $n \geq 5$. For $n=3$ it was solved in \cite{LwyLdn} and for matrices of order $n=4$ the problem was solved  in \cite{Meehan} and \cite{Mayo}. In its general form it has been studied in e.g. \cite{Boyle,GUO,Johnson,Laffey1,Laffey2,LwyLdn,Smgoc1,Smigoc2}. There are some variants of this problem namely for instance, the one called symmetric nonnegative inverse eigenvalue problem, SNIEP,  (when the nonnegative realizing matrix is required to be symmetric). This is also an open problem and some work on this can be seen in \cite{Fiedler,Laffey,LMc,Soules}.
 Another variants of the original problem is the question for which lists of $n$ real numbers can occur as eigenvalues of an $n$-by-$n$  nonnegative matrix and it is called real nonnegative inverse eigenvalue problem (RNIEP). Some results can be seen in e.g. \cite{Boro,Friedland,HzlP,RS1, SRM}. The structured NIEP is an analogous problem to NIEP where the realizing matrix must be structured, for instance, the matrix can be symmetric, Toeplitz, Hankel, circulant, normal, etc. see in \cite{Fiedler, MAR,PP} and the reference therein.

To not be so extensive on the description of this problem the reader must refer to some surveys on NIEP, for instance in \cite{Johnson2} and in the references therein.

Throughout the text, $\mathbb{C}^{k}$ and $\sigma\left(A\right)$  denote the set of complex $k$-tuples and of the eigenvalues of a square matrix $A$, respectively. Also $\rho (A)$ denotes the spectral radius of $A$. Here, the identity matrix of order $n$ is denoted by $I_n$ and if the order of the identity matrix can be easily deduced then it is just denoted by $I$.

Since a nonnegative matrix is real, its characteristic polynomial must have real coefficients and then
$ \{\lambda_{0}, \ldots, \lambda_{n-1}\} =\sigma = \overline{\sigma} =  \{\overline{\lambda_{0}}, \ldots, \overline{\lambda_{n-1}}\},$ where $\overline{\lambda}$ stands for the complex conjugate of $\lambda \in \mathbb{C}.$

Therefore consider the following definition:

\begin{definition}
The complex $n$-tuple $(\lambda_{0},\ldots,\lambda_{n-1})$ is  \textit{closed under conjugation} if the condition $$\{\lambda_{0},\ldots,\lambda_{n-1}\}=\{\overline{\lambda_{0}},\ldots,\overline{\lambda_{n-1}}\},$$ holds.
\end{definition}

The Perron-Frobenius theory of nonnegative matrices \cite{Berman,Johnson3} plays in this problem an important role.
The theory provides several and nice important necessary conditions for the NIEP. See below some of these conditions resumed. Here $s_{k} (\sigma)=\sum\limits_{i=0}^{n-1}\lambda
_{i}^{k}$ is called the $k$-th moment.

Some necessary conditions for the list  $\sigma = (\lambda_{0}, \ldots, \lambda_{n-1})$ of complex numbers to be the spectrum of a nonnegative matrix are:
\begin{enumerate}
\item The spectral radius,  $\max \left\{ \left\vert \lambda \right\vert
:\lambda \in \sigma (A) \right\} $, called the Perron eigenvalue,  belongs to $\sigma.$

\item The list $\sigma $ is closed under complex conjugation.

\item $s_{k}\geq 0$ for $ k=1,2,\ldots $

\item $s_{k}^{m}\left( \sigma \right) \leq n^{m-1}s_{km}\left( \sigma
\right)$ for $ k,m=1,2,\ldots $
\end{enumerate}
 The last condition was
proved by Johnson \cite{Johnson} and independently by Loewy and London \cite{LwyLdn}.

\vspace{0.5 cm}
The following fundamental theorem  was proven in \cite{GUO} and in its statement it is introduced formally the notion of Guo's index.
%Here $\lambda_{1}= \rho.$
\begin{theorem}\cite [Theorem 2.1]{GUO}
\label{Guo} Let $(\lambda_{1},\ldots,\lambda_{n-1})$ be a closed under complex conjugation
$(n-1)$-tuple then, there exists a real number $\lambda_0$ (called Guo\ {'}s index) where
$$\max_{{1}\leq j\leq n-1}{|\lambda_j|}\leq\lambda_0.
$$
such that the list $(\lambda,\lambda_{1},\ldots,\lambda_{n-1})$ is realizable by an $n$-by-$n$ nonnegative matrix $A$ if and only if $\lambda\geq \lambda_0$. Furthermore, $\lambda_0\leq 2n\max_{1\leq j\leq n-1}{|\lambda_j|}$.

\end{theorem}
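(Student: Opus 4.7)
The plan is to show that the set $S := \{\lambda \in \mathbb{R} : (\lambda,\lambda_1,\ldots,\lambda_{n-1}) \text{ is realizable}\}$ is a closed half-line $[\lambda_0,\infty)$ with $\lambda_0 \leq 2n\rho$, where $\rho := \max_{1 \leq j \leq n-1}|\lambda_j|$. This reduces to three properties of $S$: non-emptiness (together with the quantitative upper bound), upward-closedness, and closedness. The inequality $\lambda_0 \geq \rho$ is a direct consequence of the Perron--Frobenius theorem, since any realizing matrix has its spectral radius in its own spectrum and this spectral radius must be the real nonnegative element of maximal modulus in the prescribed list.

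For upward-closedness I would invoke Brauer's rank-one perturbation theorem: if $A \geq 0$ realizes $\{\lambda,\lambda_1,\ldots,\lambda_{n-1}\}$ with nonnegative Perron eigenvector $v$, then fixing an index $i$ with $v_i > 0$ and setting $q = ((\lambda'-\lambda)/v_i)\, e_i \geq 0$, the nonnegative matrix $A + v q^T$ has eigenvalues $\{\lambda', \lambda_1, \ldots, \lambda_{n-1}\}$ for every $\lambda' \geq \lambda$. For non-emptiness and the bound $\lambda_0 \leq 2n\rho$ I would construct an explicit realization at $\lambda = 2n\rho$: starting from a real $(n-1) \times (n-1)$ matrix $B$ in real Jordan form whose spectrum is $\{\lambda_1,\ldots,\lambda_{n-1}\}$ and whose entries have absolute value at most $\rho$, form a block matrix
$$A = \begin{pmatrix} \lambda & u^T \\ w & B + c I \end{pmatrix}$$
with $c = \rho$ (absorbing the negativity of $B$) and nonnegative vectors $u, w$ chosen so that a Brauer-style rank-one reduction yields exactly the desired spectrum $\{\lambda,\lambda_1,\ldots,\lambda_{n-1}\}$; the choice $\lambda$ of order $2n\rho$ provides enough room to compensate for the rank-one correction terms.

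The main obstacle is closedness. Given $\lambda^{(k)} \to \lambda^*$ in $S$ with realizing nonnegative matrices $A_k$, the entries of $A_k$ are not a priori uniformly bounded, since row sums may well exceed the spectral radius. The idea is to replace each $A_k$ by a diagonal similarity $D_k A_k D_k^{-1}$, preserving both nonnegativity and spectrum, chosen so that each off-diagonal pair is balanced to the common value $\sqrt{a^{(k)}_{ij} a^{(k)}_{ji}}$ whenever both are positive. Since the moments $s_1$ and $s_2$ of the prescribed spectrum are uniformly bounded in $k$, the balanced matrices have uniformly bounded entries; a convergent subsequence then yields a nonnegative limit $A^*$, and continuity of the characteristic polynomial forces $\sigma(A^*) = \{\lambda^*,\lambda_1,\ldots,\lambda_{n-1}\}$, so $\lambda^* \in S$. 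Some care is needed when $A_k$ is reducible or when the zero pattern is asymmetric, obstacles I would circumvent by a small preliminary perturbation $A_k + \varepsilon(J - I)$ followed by a limit in $\varepsilon \to 0$, after which $\lambda_0 := \min S$ satisfies all the stated conclusions.
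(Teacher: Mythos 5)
The paper itself offers no proof of this statement---it is quoted verbatim from Guo \cite{GUO}---so your outline has to be judged on its own terms. Its skeleton (show the admissible $\lambda$ form a closed half-line via monotonicity through Brauer's rank-one perturbation, closedness through a normalization-plus-compactness argument, and an explicit realization giving the bound $2n\rho$) is the right shape, but two steps fail as written. First, the set $S$ must be restricted to $\lambda\geq\rho$, i.e.\ to $\lambda$ playing the role of the Perron root, which is how Guo's index is meant (cf.\ the abstract's ``minimum spectral radius''). With your unrestricted $S$ the assertion $\min S\geq\rho$ is false: for the tail $(1,-1)$ the list $(\lambda,1,-1)$ is realized by the direct sum of the $1\times 1$ block $(\lambda)$ and $\mathrm{circ}(0,1)$ for every $\lambda\in[0,1)$, while $\rho=1$; Perron--Frobenius puts the spectral radius in the spectrum but does not force it to equal $\lambda$. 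The same restriction is what legitimizes your Brauer step, since the nonnegative eigenvector you perturb along is the one attached to the spectral radius. Second, the balancing device in your closedness argument does not exist: for $n\geq3$ the requirements $(d_i/d_j)^2=a_{ji}/a_{ij}$ for all pairs are inconsistent around a triangle unless cycle products happen to agree, so in general no diagonal similarity equalizes every off-diagonal pair at $\sqrt{a_{ij}a_{ji}}$; moreover, moment bounds control only cycle products, never entries lying on no cycle (a nilpotent pattern has arbitrarily large entries with all relevant moments zero), and the $\varepsilon(J-I)$ repair changes the very spectrum you are trying to realize. The standard fix is different: pass to the Frobenius normal form and delete the off-diagonal blocks (the spectrum is unchanged), then conjugate each irreducible diagonal block by its positive Perron eigenvector so that all row sums equal its spectral radius, which is at most $\max(\lambda^{(k)},\rho)$; the entries are then uniformly bounded and your compactness limit goes through.

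The remaining ingredient, existence together with the quantitative bound $\lambda_0\leq 2n\max_j|\lambda_j|$, is only gestured at. Adding $\rho I$ to the real Jordan form $B$ shifts every $\lambda_j$ by $\rho$, and you give no reason why a single nonnegative bordering by $u,w$ plus a ``Brauer-style rank-one reduction'' should restore exactly the spectrum $\{\lambda,\lambda_1,\ldots,\lambda_{n-1}\}$: a rank-one correction cannot in general undo a uniform shift of $n-1$ prescribed eigenvalues, and arranging the characteristic polynomial of the bordered matrix to factor as $(x-\lambda)\prod_j(x-\lambda_j)$ with nonnegative data is precisely where the work, and the constant $2n$, reside. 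The usual route (and essentially Guo's) is to realize each real entry and each conjugate pair separately, with a small individual Perron root of size at most $2\max_j|\lambda_j|$, and then merge the sublists in a way that adds the Perron roots, which produces a realization whenever $\lambda\geq 2n\max_j|\lambda_j|$; some explicit mechanism of this kind has to be supplied before the stated bound can be claimed.
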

The next lemma due to T. Laffey and H. \v{S}migoc, \cite{Laffey-Smigoc}, allows to show that the companion matrix of a certain polynomial $f(x)$ is nonnegative. In fact, in order to do that the lemma proves that all but the leading coefficients of the polynomial $f(x)$ are less than or equal to zero.

\begin{lemma}\label{lema L-S}\cite{Laffey-Smigoc}
Let $t$ be a nonnegative real number and let $\lambda_0,\lambda_1,\ldots,\lambda_{n-1}$ be complex numbers with real parts less than or equal to zero, such that the list $(\lambda_1,\lambda_2,\ldots,\lambda_{n-1})$  is closed under complex conjugation. Let
$$\lambda_0=2t-\lambda_1-\cdots-\lambda_{n-1}
$$
and
\begin{equation}
f(x)=(x-\lambda_0)\prod\limits_{j=1}^{n-1}(x-\lambda_j)=x^{n-1}-2tx^{n-2}+\alpha_2x^{n-3}+\cdots+\alpha_{n-1}.
\end{equation}
Then, the condition $\alpha_2\leq0$ implies that $\alpha_j\leq0,$ for $j=3,\ldots,n-1$.
\end{lemma}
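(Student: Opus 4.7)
First I would substitute $\mu_j := -\lambda_j$ for $j = 1,\ldots,n-1$, so that $\mathrm{Re}(\mu_j) \ge 0$ and the tuple $(\mu_j)$ is closed under conjugation. Writing $q(x) := \prod_{j=1}^{n-1}(x+\mu_j)$ and pairing conjugates gives the real factorisation
$$q(x) \;=\; \prod_i (x+a_i)\,\prod_{k=1}^{r}\bigl((x+u_k)^2 + v_k^2\bigr),$$
with $a_i,u_k \ge 0$, so every coefficient $e_s$ of $q(x)=\sum_s e_s x^{n-1-s}$ is nonnegative (here $e_s$ is the $s$th elementary symmetric function in the $\mu_j$'s). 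Expanding $f(x)=(x-\lambda_0)q(x)$ yields $\alpha_j = e_j - \lambda_0 e_{j-1}$ for $j \ge 1$ (with $e_0=1$ and $\alpha_1 = -2t$), and $\lambda_0 = 2t + e_1 \ge 0$. The lemma thus reduces to the statement: given $e_2 \le \lambda_0 e_1$ and $t \ge 0$, prove $e_j \le \lambda_0 e_{j-1}$ for each $j = 3, \ldots, n-1$.

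\textbf{Paragraph 2 (Strategy).} Because the sequence $(e_s)$ fails to be log-concave once complex $\mu_j$'s appear, the naive monotonicity argument $e_l/e_{l-1} \le e_2/e_1$ breaks down; I would instead split off the complex pairs via
$$\prod_{k=1}^{r}\bigl((x+u_k)^2+v_k^2\bigr) \;=\; \sum_{S\subseteq\{1,\ldots,r\}} V_S \prod_{k\notin S}(x+u_k)^2, \qquad V_S := \prod_{k\in S}v_k^2 \ge 0,$$
so that $q(x) = \sum_S V_S\, q_S(x)$ with each $q_S(x) := \prod_i(x+a_i)\prod_{k\notin S}(x+u_k)^2$ a polynomial with only nonnegative real roots. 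The coefficients decompose as $e_j = \sum_S V_S\, e_{j-2|S|}^{(S)}$. For each $q_S$, Newton's (and Maclaurin's) inequalities applied to the nonneg real roots give $e_l^{(S)}/e_{l-1}^{(S)} \le e_1^{(S)} \le e_1 \le \lambda_0$ for every $l \ge 1$, hence $V_S e_l^{(S)} \le \lambda_0 V_S e_{l-1}^{(S)}$.

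\textbf{Paragraph 3 (Main difficulty and finish).} Summing these term-by-term inequalities over $S$ produces $e_j \le \lambda_0 e_{j-1} + B_j$, where the boundary contribution $B_j := \sum_{|S|=j/2} V_S = e_{j/2}(v_1^2,\ldots,v_r^2)$ appears only when $j$ is even (the terms with $|S|=j/2$ contribute $V_S \cdot e_0^{(S)} = V_S$ on the left but have no partner on the right). For odd $j$ this already yields $\alpha_j \le 0$. The heart of the proof is the even-$j$ case: one must absorb the nonnegative $B_j$. Unpacking the hypothesis $\alpha_2 \le 0$ via the same decomposition reads $\sum_k v_k^2 \le -\beta_2$ where $\beta_2 := e_2^{(\emptyset)} - \lambda_0 e_1^{(\emptyset)} \le 0$ is the analogous quantity for the all-real-root polynomial $q_\emptyset$. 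Combining this bound with Maclaurin's inequality on the nonnegatives $v_1^2,\ldots,v_r^2$ (to control $B_j = e_{j/2}(v_k^2)$) and with the strict Newton slack in each $q_S$ (coming from $\lambda_0 - e_1^{(S)} \ge 2t + 2\sum_{k\in S} u_k$) should dominate $B_j$ and give $\alpha_j \le 0$. I expect this quantitative bookkeeping, rather than any single clever identity, to be the main technical obstacle; an additional induction on $j$ (or on the number $r$ of complex pairs) is likely needed to keep the estimates clean.
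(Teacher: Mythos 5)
The paper states this lemma without proof, quoting it from Laffey and \v{S}migoc \cite{Laffey-Smigoc}, so there is no internal argument to compare yours against; judged on its own, your proposal is sound up to a point but does not close. Paragraph 1 is correct: with $\mu_j=-\lambda_j$ one gets $e_s\ge 0$, $\alpha_j=e_j-\lambda_0e_{j-1}$ and $\lambda_0=2t+e_1\ge e_1$. The splitting $q=\sum_S V_S q_S$ with $V_S=\prod_{k\in S}v_k^2$ is a valid identity, the coefficient decomposition $e_j=\sum_S V_S e^{(S)}_{j-2|S|}$ is right, the termwise bound $e^{(S)}_l\le e^{(S)}_1e^{(S)}_{l-1}\le\lambda_0 e^{(S)}_{l-1}$ holds for nonnegative real roots, and the odd-$j$ case genuinely follows (notably without using $\alpha_2\le0$). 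The gap is exactly where you flag it: the even-$j$ case is only a statement of intent, and $j=4$ is already the first nontrivial instance of the lemma, so what remains unproved is essentially the whole content.

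Moreover, the two tools you name for absorbing $B_{2h}=e_h(v_1^2,\ldots,v_r^2)$ are not enough as stated. The hypothesis controls only $\sum_k v_k^2\le D:=\lambda_0e_1^{(\emptyset)}-e_2^{(\emptyset)}$, while the slack actually available at an $(h-1)$-subset $S'$ is $V_{S'}\left(\lambda_0e_1^{(S')}-e_2^{(S')}\right)$, which can be far smaller than $V_{S'}D$ and can even vanish. Take $t=0$ and the three conjugate pairs $-U\pm iV$, $\pm iw$, $\pm iw$ with $w\ll U$: then $\lambda_0=2U$, the hypothesis reads $V^2+2w^2\le 3U^2$, and for $j=4$ one must absorb $B_4=2V^2w^2+w^4$ against a total slack of $6U^2w^2$ (the subset $S'=\{1\}$, obtained by deleting the pair $-U\pm iV$, contributes zero slack, and the $S=\emptyset$ level contributes zero as well). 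Your global Maclaurin bound gives only $B_4\le\frac{1}{3}\left(\sum_kv_k^2\right)^2\approx 3U^4$, which dwarfs the available $6U^2w^2$, and any symmetric routing of the level-$h$ terms through their $(h-1)$-subsets forces a positive amount onto $S'=\{1\}$, whose slack is zero; the true inequality survives only through a configuration-dependent routing (with margin $3w^4$ when the hypothesis is tight). So finishing along your lines requires a genuinely sharper mechanism---an induction, a flow/assignment argument, or a different identity---and that missing mechanism is the heart of the lemma, not routine bookkeeping.
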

The following definition generalizes the concept of circulant matrix.
\begin{definition}
A square matrix of order $n$ with $n\geq 2$ is called a \textit{permutative matrix} or permutative when all its rows (up to the first one) are permutations of precisely its first row.
\end{definition}
This concept was introduced in \cite{PP}. The spectra of a class of permutative matrices was studied in \cite{MAR}.
In particular, spectral results for matrices partitioned into $2$-by-$2$ symmetric blocks were presented and, using these results sufficient conditions on a given list to be the list of eigenvalues of a nonnegative permutative matrix were obtained and the corresponding permutative matrices were constructed. Here, in \cite{MAR}, it was introduced the concept of permutatively equivalent matrix.

\begin{definition} \cite{MAR}
{\rm
Let $\mathbf{\tau }=\left( \tau _{1},\ldots ,\tau _{n}\right) $ be an $n$-tuple whose elements are permutations in the symmetric group\ $S_{n}$, with $\tau _{1}=id$.\ Let $\mathbf{a=}\left( a_{1},\ldots ,a_{n}\right) \in
\mathbb{C}^{n}$. Define the row-vector,
\begin{equation*}
\tau _{j}\left( \mathbf{a}\right) =\left( a_{\tau _{j}\left( 1\right)
},\ldots ,a_{\tau _{j}\left( n\right) }\right)
\end{equation*}%
and consider the matrix
\begin{equation}
\tau \left( \mathbf{a}\right) =%
\begin{pmatrix}
\tau _{1}\left( \mathbf{a}\right)   \\
\tau _{2}\left( \mathbf{a}\right)  \\
\vdots \\
\tau _{n-1}\left( \mathbf{a}\right)  \\
\tau _{n}\left( \mathbf{a}\right)
\end{pmatrix}
.  \label{permut}
\end{equation}
An $n$-by-$n$ matrix $A$ is called $\mathbf{\tau}$\emph{-permutative} if
$A=\tau \left( \mathbf{a}\right) $ for some $n$-tuple $\mathbf{a}$.}
\end{definition}

\begin{definition} \cite{MAR}
\label{rm}
If $A$ and $B$ are $\mathbf{\tau }$-permutative by a common vector
$\mathbf{\tau }=\left( \tau _{1},\ldots ,\tau _{n}\right)$
then, they are called \textit{permutatively equivalent}.
\end{definition}

%When the NIEP involves permutative matrices the problem is called PNIEP.

The paper is organized as follows: In Section 2 we introduce some concepts and results related with circulant matrices and block circulant matrices. In particular, we give a new necessary and sufficient condition for the list $\sigma=(\lambda_1,-a+bi,-a-bi,\ldots,-a+bi,-a-bi)$  with $a>0, b>0$  to be the spectrum of a nonnegative circulant matrix. This result improves the one proved in \cite[Proposition 4]{RS}. We also refer the importance of circulant matrices and block circulant matrices in some applied areas. In Section 3 we  present  spectral  results for  matrices  partitioned into blocks  where  each  block  is  a  square  circulant  matrix  of  order $n$ then, the spectral results are applied to structured NIEP and SNIEP.
%and PNIEP.
In Section 4  some properties of a matrix partitioned into blocks with a certain structure are found using some already known structure on matrices of smaller size.
%At Section 5 we study sufficient conditions for the existence of a nonnegative matrix with spectrum as the union of the spectra of matrices of smaller size.
Finally at Section 5 it is established the  Guo's index for block circulant matrices with circulant blocks. Throughout the paper some illustrative examples are presented.

\section{Circulant matrices and block circulant matrices}

The class of circulant matrices and their properties are introduced in \cite{Davis}.
In \cite{Karner} it was presented a spectral decomposition of four types of real
circulant matrices. Among others, right circulants (whose elements topple
from right to left) as well as skew right circulants (whose elements change
their sign when toppling) were analyzed. The inherent periodicity of
circulant matrices means that they are closely related to Fourier analysis
and group theory.

Let $a=\left( a_{0},a_{1},\ldots ,a_{m-1}\right) ^{T}\in \mathbb{R}^{m}$ be given.

\begin{definition} \cite{Davis, Karner}
A \emph{\ real right circulant matrix} (or simply, \emph{circulant matrix}), is a matrix of the form
\begin{equation*}
A\left( a\right) =
\begin{pmatrix}
a_{0} & a_{1} & \ldots  & \ldots & a_{m-1} \\
a_{m-1} & a_{0} & a_{1} & \ldots & a_{m-2} \\
a_{m-2} & \ddots  & \ddots  & \ddots  & \vdots  \\
\vdots  & \ddots  & \ddots  & a_{0} & a_{1} \\
a_{1} & \ldots  & a_{m-2} & a_{m-1} & a_{0}
\end{pmatrix}
\end{equation*}
where each row is a cyclic shift of the row above to the right.
\end{definition}

The matrix $A\left( a\right)$ is clearly determined by its first row. Therefore, the above circulant matrix is also sometimes denoted by $circ(a_0,a_1,\ldots,a_{m-1})$ or, in a more simple way by $(a_0,a_1,\ldots,a_{m-1}).$
The next concepts can be seen in \cite{Karner}.
The entries of the unitary discrete Fourier transform (DFT) matrix $F=\left(
f_{pq}\right) $ are given by
\begin{equation}
\label{fourier-m}
f_{pq}:=\frac{1}{\sqrt{n}}\omega ^{pq},\ p=0,1,\ldots ,n-1,\ q=0,1,\ldots
,m-1,
\end{equation}%
where
\begin{equation}
\label{omega_root}
\omega =\cos \frac{2\pi }{m}+i\sin \frac{2\pi }{m}.
\end{equation}%

The following results characterize the circulant spectra.

\begin{theorem}
\label{teorema 22}
\cite{Karner} Let $a=(a_0,\ldots,a_{m-1})$ and $A(a)=circ(a_0,\ldots,a_{m-1}).$ Then $$A\left( a\right) =F\Lambda \left( a\right) F^{\ast },$$ with
$$
\Lambda \left( a\right) =diag\left( \lambda _{0}\left( a\right) ,\lambda
_{1}\left( a\right) ,\ldots ,\lambda _{m-1}\left( a\right) \right) $$
and
\begin{eqnarray}
\label{eigenvalues}
\text{\ }\lambda _{k}\left( a\right) =\sum\limits_{\ell=0}^{m-1}a_{\ell}\omega
^{k\ell}\text{,\quad\ }k=0,1,\ldots ,m-1.
\end{eqnarray}
\end{theorem}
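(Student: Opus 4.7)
The plan is to reduce the theorem to the spectral analysis of the basic cyclic shift, exploiting the fact that every circulant is a polynomial in one fixed matrix. First, I would introduce the \emph{basic} circulant $C=\mathrm{circ}(0,1,0,\ldots,0)$, i.e.\ the matrix of the cyclic permutation $e_p\mapsto e_{p-1\pmod m}$, and verify by inspection that
\begin{equation*}
A(a)=\sum_{\ell=0}^{m-1}a_{\ell}\,C^{\ell},
\end{equation*}
since $C^{\ell}$ is the circulant whose $\ell$-th entry in the first row equals $1$ and all others vanish. This reduces the whole problem to diagonalising $C$.

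Next, I would show that the columns of $F$ are eigenvectors of $C$. Denoting the $k$-th column of $F$ by $v_k=\frac{1}{\sqrt{m}}(1,\omega^{k},\omega^{2k},\ldots,\omega^{(m-1)k})^{T}$, a direct computation of $(Cv_k)_p=(v_k)_{p+1\pmod m}=\frac{1}{\sqrt{m}}\omega^{k(p+1)}=\omega^{k}(v_k)_p$ gives $Cv_k=\omega^{k}v_k$. Consequently,
\begin{equation*}
A(a)\,v_k=\sum_{\ell=0}^{m-1}a_{\ell}C^{\ell}v_k=\Bigl(\sum_{\ell=0}^{m-1}a_{\ell}\omega^{k\ell}\Bigr)v_k=\lambda_k(a)\,v_k,
\end{equation*}
which is exactly formula~(\ref{eigenvalues}). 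Stacking these $m$ identities column by column yields $A(a)F=F\Lambda(a)$.

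To conclude $A(a)=F\Lambda(a)F^{\ast}$ one needs $F^{\ast}F=I_m$, and this is the step that requires the only genuine calculation. I would establish it via the orthogonality relation for the $m$-th roots of unity: for $k,k'\in\{0,1,\ldots,m-1\}$,
\begin{equation*}
(F^{\ast}F)_{kk'}=\frac{1}{m}\sum_{p=0}^{m-1}\overline{\omega^{pk}}\,\omega^{pk'}=\frac{1}{m}\sum_{p=0}^{m-1}\omega^{p(k'-k)}=\delta_{kk'},
\end{equation*}
using that for $k\neq k'$ the sum is a geometric series with ratio $\omega^{k'-k}\neq 1$ whose numerator $\omega^{m(k'-k)}-1$ vanishes, while for $k=k'$ each term equals $1$. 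Hence $F$ is unitary, $F^{-1}=F^{\ast}$, and the identity $A(a)F=F\Lambda(a)$ becomes $A(a)=F\Lambda(a)F^{\ast}$.

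The main (and essentially only) obstacle is the orthogonality computation that gives the unitarity of $F$; the rest is bookkeeping, since once one recognises $A(a)$ as $\sum_\ell a_\ell C^\ell$ and identifies the eigenpairs of $C$, the eigenvalues of $A(a)$ follow automatically by the spectral mapping for a polynomial in a single diagonalisable matrix.
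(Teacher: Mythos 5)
Your proof is correct and complete: writing $A(a)=\sum_{\ell=0}^{m-1}a_\ell C^\ell$ with $C$ the cyclic shift, checking $Cv_k=\omega^k v_k$ for the columns $v_k$ of $F$, and invoking the orthogonality of the $m$-th roots of unity to get $F^{*}F=I_m$ is the classical diagonalization argument for circulants. The paper itself states this theorem as a citation to \cite{Karner} and gives no proof, so there is nothing to contrast with; your argument is exactly the standard one underlying the cited result, and all the index bookkeeping (including the wraparound $\omega^{m}=1$ and the geometric-series step) checks out.
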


\begin{corollary}
\label{fund} Let $a$ defined as in Theorem \ref{teorema 22} and consider $$v:=v(a)=\left( \lambda _{0}\left( a\right) ,\lambda _{1}\left(
a\right) ,\ldots ,\lambda _{m-1}\left( a\right) \right) ^{T}.$$
Then,
\begin{eqnarray}
\label{coefficients}
a_{k}=\frac{1}{m}\sum\limits_{\ell=0}^{m-1}\lambda _{\ell}\omega ^{-k\ell}\text{%
,\quad } k=0,1,\ldots ,m-1.
\end{eqnarray}
\end{corollary}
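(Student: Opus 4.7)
The corollary is the Fourier inversion formula for the DFT-diagonalization $A(a)=F\Lambda(a)F^{\ast}$ recorded in Theorem \ref{teorema 22}: it asserts that one can recover the generating row $a$ of a circulant from its eigenvalues by the discrete inverse transform. My plan is to derive (\ref{coefficients}) directly from (\ref{eigenvalues}) by invoking the standard orthogonality of the $m$-th roots of unity; no further structure of $A(a)$ beyond what is already in Theorem \ref{teorema 22} is needed.

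Concretely, I would fix $k\in\{0,1,\ldots,m-1\}$, start from
\begin{equation*}
\lambda_{\ell}(a)=\sum_{j=0}^{m-1}a_{j}\,\omega^{\ell j},\qquad \ell=0,\ldots,m-1,
\end{equation*}
multiply both sides by $\omega^{-k\ell}/m$, and sum over $\ell$. Swapping the order of the two finite sums one obtains
\begin{equation*}
\frac{1}{m}\sum_{\ell=0}^{m-1}\lambda_{\ell}(a)\,\omega^{-k\ell}
=\sum_{j=0}^{m-1}a_{j}\cdot\frac{1}{m}\sum_{\ell=0}^{m-1}\omega^{\ell(j-k)}.
\end{equation*}

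The remaining step is to evaluate the inner sum. Since $\omega$ in (\ref{omega_root}) is a primitive $m$-th root of unity, a direct geometric-series computation gives
\begin{equation*}
\frac{1}{m}\sum_{\ell=0}^{m-1}\omega^{\ell(j-k)}=\delta_{jk},
\end{equation*}
because for $j\equiv k\pmod m$ each term equals $1$, while for $j\not\equiv k\pmod m$ one has $\omega^{j-k}\neq 1$ and $(\omega^{j-k})^{m}=1$, so the sum telescopes to zero. Substituting this back collapses the right-hand side to $a_{k}$, which is exactly (\ref{coefficients}).

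The argument is essentially mechanical; the only subtlety, if any, is to justify that $\omega^{j-k}\neq 1$ whenever $j\neq k$ with $0\le j,k\le m-1$, which follows from the fact that $\omega$ has order exactly $m$. Equivalently, one could deduce (\ref{coefficients}) at once from $A(a)=F\Lambda(a)F^{\ast}$ by reading off the first column of $A(a)$, using that the entries of $F$ are given by (\ref{fourier-m}) and that $F$ is unitary; this repackages the same orthogonality relation in matrix form and yields the same conclusion.
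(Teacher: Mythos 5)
Your proof is correct and matches the intended derivation: the paper gives no explicit proof of this corollary, treating it as the standard inverse DFT consequence of Theorem \ref{teorema 22} (cf.\ the subsequent remark $a=\tfrac{1}{\sqrt{m}}F^{\ast}v(a)$, which is just the unitarity of $F$ in matrix form). Your orthogonality-of-roots-of-unity computation is precisely that argument, so there is nothing to add.
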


\begin{remark}

Let $a=\left(  a_{0},\ldots,a_{m-1}\right)  ^{T}$. By Corollary \ref{fund}
 \[a=\frac{1}{\sqrt{m}}F^{\ast}v(a)\quad \text{with}\quad v(a)=\sqrt{m}Fa.\]

\end{remark}

The next proposition obtains the Guo's index of some spectra and it is a generalization of the result obtained by O. Rojo and R. Soto in  \cite[Proposition 4]{RS}.

\begin{proposition}
Consider the complex list of $n$ elements $$\sigma=(\lambda_1,-a+bi,-a-bi,\ldots,-a+bi,-a-bi)$$ with $a>0, b>0$ and $\lambda_1\geq\sqrt{a^2+b^2}$. Then, $\sigma$ is realizable by an $n$-by-$n$ matrix $A$ if and only if
\begin{equation}
    \lambda_1\geq(n-1)a+n\max\bigg\{0,\frac{b}{\sqrt{n}}-a\bigg\}.
\end{equation}
\end{proposition}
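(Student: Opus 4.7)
The plan is to prove both directions: necessity via the Perron–Frobenius moment inequalities listed in Section~1, and sufficiency via the Laffey–Šmigoc Lemma (Lemma~\ref{lema L-S}), applied directly to the characteristic polynomial of $\sigma$ in one case and after a scalar shift in the other.

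For necessity, suppose $\sigma$ is the spectrum of some $A\geq 0$. Condition~(3) gives $s_1=\lambda_1-(n-1)a\geq 0$, hence $\lambda_1\geq (n-1)a$. Using $(-a\pm bi)^2=a^2-b^2\mp 2abi$, a short computation gives $s_2=\lambda_1^2+(n-1)(a^2-b^2)$, so condition~(4) with $k=1,\,m=2$ (the inequality $s_1^2\leq n s_2$) rearranges to $(n-1)\bigl((\lambda_1+a)^2-nb^2\bigr)\geq 0$, whence $\lambda_1\geq \sqrt n\,b-a$. Combining the two lower bounds, $\lambda_1\geq\max\{(n-1)a,\sqrt n\,b-a\}$, and this maximum coincides with $(n-1)a+n\max\{0,b/\sqrt n-a\}$ on each side of the threshold $b=a\sqrt n$.

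For sufficiency, first assume $b\leq a\sqrt n$, so the hypothesis reads $\lambda_1\geq(n-1)a$. I apply Lemma~\ref{lema L-S} with the $(n-1)$-tuple of complex numbers equal to the $(n-1)/2$ conjugate pairs $-a\pm bi$ (all with real part $-a\leq 0$) and with $t:=(\lambda_1-(n-1)a)/2\geq 0$. This choice makes the Lemma's $\lambda_0$ equal to our $\lambda_1$, so the polynomial $f(x)$ of the Lemma is the characteristic polynomial of $\sigma$. From $\alpha_2=\tfrac12(s_1^2-s_2)$ a direct calculation gives
\[
\alpha_2=\frac{n-1}{2}\bigl((n-2)a^2+b^2-2a\lambda_1\bigr),
\]
which is $\leq 0$: the identity $2(n-1)a^2-(n-2)a^2=na^2$ combined with $\lambda_1\geq(n-1)a$ and $b^2\leq na^2$ yields $2a\lambda_1\geq 2(n-1)a^2\geq(n-2)a^2+b^2$. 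Lemma~\ref{lema L-S} then forces $\alpha_j\leq 0$ for all $j\geq 3$, so the companion matrix of $f$ is entrywise nonnegative and realizes $\sigma$.

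In the remaining range $b>a\sqrt n$, the hypothesis becomes $\lambda_1\geq\sqrt n\,b-a$. I set $c:=b/\sqrt n-a>0$ and consider the translated list $\sigma':=\sigma-c$, whose new parameters are $a':=a+c=b/\sqrt n$ (with $b$ unchanged) and new Perron $\lambda_1':=\lambda_1-c$. Since $b=a'\sqrt n$, the triple $(a',b,\lambda_1')$ sits at the boundary of the previous case, and the condition $\lambda_1'\geq(n-1)a'$ is readily seen to be equivalent to $\lambda_1\geq\sqrt n\,b-a$, which holds. Hence the previous case furnishes a nonnegative realization $A'$ of $\sigma'$, and $A:=cI+A'$ is nonnegative with $\sigma(A)=c+\sigma(A')=\sigma$. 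The main obstacle is the explicit evaluation of $\alpha_2$ and the verification of its non-positivity under the first case's hypothesis; after that, the second case reduces to the first by a one-line scalar shift, which is precisely why the JLL bound can be attained.
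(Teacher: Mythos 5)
Your proof is correct, and it is organized differently from the paper's even though both hinge on Lemma~\ref{lema L-S} together with a companion matrix and a shift by a multiple of $I$. The paper runs a single uniform construction: it shifts the whole list by $s\geq 0$ to the zero-trace list $\sigma'=\{(n-1)(a+s),-(a+s)\pm bi,\ldots\}$, applies the lemma with $t=0$ to get a trace-zero companion matrix $B$, reads off from the Newton identity that nonnegativity of the $x^{n-2}$ coefficient amounts to $a+s\geq b/\sqrt n$, and recovers $\sigma$ as $B+sI$ with $\lambda_1=(n-1)a+ns$; the ``only if'' direction is asserted rather tersely from this construction. You instead prove necessity explicitly and independently of any construction, via the trace condition $s_1\geq 0$ and the JLL inequality $s_1^2\leq ns_2$, checking that $\max\{(n-1)a,\sqrt n\,b-a\}$ coincides with $(n-1)a+n\max\{0,b/\sqrt n-a\}$ in both regimes — this is arguably a cleaner justification of the ``only if'' half than what the paper writes. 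For sufficiency you split cases: when $b\leq a\sqrt n$ you apply the lemma directly to the unshifted list with $t=(\lambda_1-(n-1)a)/2$, and your evaluation $\alpha_2=\tfrac{n-1}{2}\bigl((n-2)a^2+b^2-2a\lambda_1\bigr)\leq 0$ is correct under the hypotheses; when $b>a\sqrt n$ you shift only by $c=b/\sqrt n-a$ to land exactly on the boundary case and add $cI$ back. What the paper's single-parameter shift buys is one construction covering all cases (and it feeds the circulant realization displayed in the following remark); what your version buys is a genuinely complete equivalence, since the necessity no longer rests on the failure of one particular construction.
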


\begin{proof}
Let $s\geq 0$ and consider $\sigma'=\{(n-1)(a+s),-(a+s)+bi,-(a+s)-bi,\ldots,-(a+s)+bi, -(a+s)-bi\}$. We claim that there exists an $s\geq 0$ such that a companion matrix
$$
B=\begin{pmatrix}
0 & 1 & 0 & \ldots & 0 & 0\\
0 & 0 & 1 & \ldots & 0 & 0\\
\vdots & \vdots & \ddots & \ddots & \vdots & \vdots\\
0 & 0 & 0 & \ldots & 0 & 1\\
b_n & b_{n-1} & b_{n-2}& \ldots & b_2 & 0\\
\end{pmatrix}
$$
realizes $\sigma'$. In fact, for the characteristic polynomial of $B$ we have $$p(x)=x^n-b_2(\sigma)x^{n-2}-\cdots-b_n(\sigma),$$
and from the Newton identities, \cite{Johnson2},
$$
b_2(\sigma)=-\frac{1}{2}[(n-1)^2(a+s)^2+(n-1)(a+s)^2-(n-1)b^2].
$$
As $B$ must be nonnegative, from Lemma \ref{lema L-S} it is required that $-b_2(\sigma)\leq0$ or, in a equivalent way, that $a+s\geq\dfrac{b}{\sqrt{n}}$.
Thus
\begin{equation}\label{desiq1}
    s\geq\max\bigg\{0,\dfrac{b}{\sqrt{n}}-a\bigg\}\textcolor{red}{.}
\end{equation}
Therefore, $\sigma$ is realizable by the matrix $A=B+sI\geq0$ if and only if $\lambda_1\geq(n-1)a+ns,$ which shows the result.
\end{proof}

\begin{remark}
Notice that the condition in (\ref{desiq1}) is also a necessary and sufficient condition for the existence of a circulant nonnegative matrix equals to the matrix below.
{\small
\begin{equation} \label{matrixc}
\frac{1}{n}
\left(
\begin{array}{cccccc}
 \lambda_1-(n-1)a & \lambda_1+a+\sqrt{n}b & \lambda_1+a-\sqrt{n}b & \ldots & \lambda_1+a+\sqrt{n}b & \lambda_1+a-\sqrt{n}b\\
    \lambda_1+a-\sqrt{n}b & \lambda_1-(n-1)a & \lambda_1+a+\sqrt{n}b & \ldots & \lambda_1+a-\sqrt{n}b & \lambda_1+a+\sqrt{n}b \\
    \vdots & \vdots & \vdots & \ddots & \vdots & \vdots\\
    \lambda_1+a+\sqrt{n}b & \lambda_1+a-\sqrt{n}b & \lambda_1+a+\sqrt{n}b & \ldots & \lambda_1+a-\sqrt{n}b & \lambda_1-(n-1)a
  \end{array}
\right)
\end{equation}
}
with spectrum $\sigma=\{\lambda_1,-a+bi,-a-ib,\ldots,-a+ib,-a-ib\}$.
\end{remark}
\begin{definition}
A block circulant matrix is a matrix in the following form
\begin{eqnarray}
\label{circbloques}
\left(
\begin{array}{cccc}
A_0 & A_1 & \ldots & A_{m-1}\\
A_{m-1} & A_0 & \ldots & A_{m-2}\\
\vdots & \vdots & \ddots & \vdots \\
A_1 & A_2 & \ldots & A_0
\end{array}
\right),
\end{eqnarray}

where $A_i$ are $n$-by-$n$ arbitrary matrices.
\end{definition}

The partitioned into blocks matrices have particular importance in many areas. We refer here for instance Engineering, see  \cite{Kang} where the authors study forced vibration of symmetric structures. They present a method to calculate the eigenvectors of these matrices and then from the discrete structure they establish relationships for continuum structures. After discussing the dynamics aspects of the structures they consider subjects from earthquake engineering  and spectral analysis from such structures. Moreover, block circulant matrices are used in coding theory. For instance, in \cite{Karlin} the author used the canonical form based in circulant matrices to found many good codes: quadratic residue codes and high quality group codes. Some LDPC codes can also be defined by a matrix partitioned into blocks where each block are circulant matrices \cite{Kenneth}. See more applications in coding theory in \cite{Georgious}, and the references therein.
More on circulant block matrices with the circulant or factor circulant structure was considered for instance in \cite{Baker, Chao,Smith, Trapp}.

\section{Eigenpairs for square matrices partitioned into circulant blocks}

In this section we present spectral results for matrices partitioned into blocks where each block is a circulant matrix of order $n$.
% Here we do not restrict to only four matricial blocks.
%Then, the spectral results are applied to structured NIEP, SNIEP and PNIEP.
The next theorem is valid in an algebraic closed field $K$ of characteristic $0$. For intance, $K=\mathbb{C}$.

\begin{theorem}
\label{main}
Let $K$ be an algebraically closed field of characteristic $0$ and suppose
that $A=\left( A(i,j)\right) $ is an $mn$-by-$mn$ matrix partitioned into $n^2$ circulant blocks of $m$-by-$m$ matrices, where for $ 1\leq i,j\leq n,$
\begin{equation}
\label{mtcsa}
A=\left( A(i,j)\right), \mbox{\, with\,} \ A(i,j)=circ(a(i,j)),
\end{equation}
where
\begin{align*}
 a(i,j)&=(a_0(i,j),a_1(i,j),\ldots,a_{m-1}(i,j))^T,
\end{align*}
with $a_k(i,j)\in K,\ \text{for }1\leq i,j\leq n \quad \text{and\quad} k=0,1,\ldots,m-1.$
For $k=0,1,\ldots,m-1,$ if
\begin{eqnarray}
\label{ek}
\mathbf{e_k}^T&=&\left(1,\omega^{k},\omega^{2k},\ldots,\omega^{(m-1)k}\right),
\end{eqnarray}
where $\omega$ is as in (\ref{omega_root}),
then

\begin{eqnarray}
\label{unionofsets}
\sigma \left( A\right) =\bigcup_{k=0}^{m-1}\sigma \left( S_k\right),
\end{eqnarray}
where
\begin{align}
\label{mtcsk}
S_k&=\left( s_k(i,j)\right)_{1\leq i,j\leq n}, \ \text{with }
s_k(i,j)=\mathbf{e_k}^Ta(i,j) ,
\end{align}
for $k=0,1,\ldots,m-1 \ \text{and}\ 1\leq i,j\leq n.$
\end{theorem}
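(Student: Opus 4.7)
The plan is to exploit the fact that every circulant block is simultaneously diagonalized by a common Fourier-type matrix, and then to apply a commutation permutation that regroups the resulting diagonal entries into the $n\times n$ blocks $S_k$.

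First, I would invoke Theorem~\ref{teorema 22} on each block to write $A(i,j)=F\Lambda(a(i,j))F^{\ast}$, where $\Lambda(a(i,j))$ is diagonal with $k$-th entry $\lambda_k(a(i,j))=\sum_{\ell=0}^{m-1}a_\ell(i,j)\omega^{k\ell}$. By the definition (\ref{ek}) of $\mathbf{e}_k$, this entry is exactly $\mathbf{e}_k^{T}a(i,j)=s_k(i,j)$. Over an arbitrary algebraically closed field $K$ of characteristic $0$ a primitive $m$-th root of unity $\omega$ still exists, so the corresponding Vandermonde matrix of powers of $\omega$ is invertible and the same simultaneous diagonalization goes through; no feature specific to $\mathbb{C}$ is used beyond the presence of $\omega$.

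Next, I would globalise the similarity by setting $U=I_n\otimes F$. Since $U$ acts blockwise on the partition, $U^{-1}AU$ is the $n\times n$ block matrix whose $(i,j)$-block equals $F^{-1}A(i,j)F=\Lambda(a(i,j))$. Using the Kronecker indexing $\{1,\ldots,n\}\times\{0,\ldots,m-1\}$, the $((i,p),(j,q))$-entry of $U^{-1}AU$ is $\delta_{pq}\,s_p(i,j)$. Now let $P$ be the perfect-shuffle permutation matrix determined by $P(e_i\otimes f_k)=f_k\otimes e_i$, where $\{e_i\}$ is the standard basis of $K^n$ and $\{f_k\}$ is the standard basis of $K^m$. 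A direct check on this indexing shows that $P(U^{-1}AU)P^{-1}$ is block-diagonal with $m$ blocks indexed by $k\in\{0,\ldots,m-1\}$, and the $k$-th block has $(i,j)$-entry $s_k(i,j)$; this is precisely $S_k$ as defined in (\ref{mtcsk}). Since similarity preserves the spectrum, $\sigma(A)=\sigma(\mathrm{diag}(S_0,\ldots,S_{m-1}))=\bigcup_{k=0}^{m-1}\sigma(S_k)$.

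The main obstacle is the bookkeeping in the permutation step: one must verify carefully that the commutation matrix $P$ converts the ``diagonal-within-each-block'' arrangement of $U^{-1}AU$ into the ``block-indexed-by-$k$'' arrangement with the entries $s_k(i,j)$ in the correct positions. Everything else reduces to the classical spectral decomposition of circulants (Theorem~\ref{teorema 22}) and the invariance of the spectrum under similarity transformations.
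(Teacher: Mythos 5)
Your proof is correct, but it follows a genuinely different route from the paper. The paper argues eigenpair-by-eigenpair: it observes $A(i,j)\mathbf{e}_k=s_k(i,j)\mathbf{e}_k$, checks directly that for any eigenpair $(\beta,u)$ of $S_k$ the Kronecker vector $u\otimes\mathbf{e}_k$ is an eigenvector of $A$ for $\beta$, and then proves linear independence of the assembled vectors $u\otimes\mathbf{e}_k$ (running over eigenvector bases $\Theta_k$ of the $S_k$) to force equality of the two spectra. You instead exhibit a global similarity: conjugating by $I_n\otimes F$ turns every block into $\Lambda(a(i,j))$, i.e.\ $(I_n\otimes F)^{-1}A(I_n\otimes F)=\sum_{k}S_k\otimes f_kf_k^{T}$, and the perfect-shuffle (commutation) matrix then rearranges this into $\mathrm{diag}(S_0,\ldots,S_{m-1})$, so $\sigma(A)=\bigcup_k\sigma(S_k)$ by invariance of the spectrum under similarity. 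Your route is arguably cleaner and slightly stronger: it yields the spectrum \emph{with multiplicities}, it needs no choice of eigenvector bases $\Theta_k$ (the paper's argument tacitly assumes each $S_k$ is diagonalizable, which is not guaranteed), and, as you note, it transfers verbatim to any algebraically closed $K$ of characteristic $0$ once $F^{\ast}$ is read as $F^{-1}=\frac{1}{\sqrt m}\left(\omega^{-pq}\right)$. What the paper's approach buys in exchange is the explicit Kronecker form $u\otimes\mathbf{e}_k$ of the eigenvectors of $A$, which is the structural information reused later (e.g.\ in Theorem \ref{defLk} and the construction of realizing matrices), and it avoids the commutation-matrix bookkeeping that is the only delicate step in your argument.
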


\begin{proof}
The vector $\mathbf{e_k} $ in (\ref{ek}), is
the $(k+1)$-th column of $\sqrt{n}F,$ where $F$ is the matrix in (\ref{fourier-m}). It is clear that (\ref{eigenvalues}) implies that \[A(i,j)\mathbf{e_k}=s_k(i,j)\mathbf{e_k}.\]
Let $(\beta,u)$ with $u=(u_1,u_2,\ldots,u_n)$ be an eigenpair of the matrix $S_k$ in (\ref{mtcsk}), and
for $j=1,2,\ldots,n,$ let $v_j(k)=u_j\mathbf{e_k}.$

Since
\begin{eqnarray*}
A(i,j)v_{j}(k)      &      =       &   A(i,j)u_{j}\mathbf{e_k}\\
                    &      =       &s_{k}(i,j)u_{j}\mathbf{e_k}\\
                    &      =       &s_{k}(i,j)v_{j}(k) \text{,}
\end{eqnarray*}
then, for every $i=1,\ldots ,n,$
\begin{eqnarray*}
\sum\limits_{j=1}^{n}A(i,j)v_{j}(k)     &  =    &\left(\sum\limits_{j=1}^{n}s_{k}(i,j)u_{j}\right) \mathbf{e_k}\\
                                        &  =    &\left( \beta u_{i}\right) \mathbf{e_k}\\
                                        &  =    &\beta \left(u_{i}\mathbf{e_k}\right)\\
                                        &  =    &\beta v_{i}(k)\text{.}
\end{eqnarray*}
Therefore, the block vector $v_{k}=u\otimes \mathbf{e_k}$,  where $\otimes$ stands for the Kronecker matrix product (see \cite{Johnson3, Zhan}) is an eigenvector of $A$ and then, $\left(\beta ,v_{k}\right) $ is an eigenpair of $A$.
Thus, the union in the right hand side in (\ref{unionofsets}) is a subset of $\sigma\left( A\right)$ which has the same cardinally of $\sigma\left( A\right)$. The equality will follow after proving the linear independence of the set
$$\Upsilon= \Upsilon_{0} \cup \Upsilon_{1} \cup \cdots \cup \Upsilon_{m-1},$$ where, for $k=0,\ldots,m-1$,
$\Upsilon_{k}=\{u^{T}\otimes \mathbf{e_k}^{T}:u\in \Theta_k \},$
% $\sigma \left(S_k\right) \subseteq \sigma \left( A\right). $
%Suppose that,
and $\Theta_{k}$
%=\left\{ v_k(i)^T=\left( x_k(1,i),x_k(2,i),\ldots, %x_k(n,i)\right)^{T}:i=1,\ldots ,n\right\},  $$
%are
is a basis formed with eigenvectors of $S_k$.
%The result will follow after proving the linear independence of the %set
%$$\Upsilon= \Upsilon_{0} \cup \Upsilon_{1} \cup \cdots \cup %\Upsilon_{m-1},$$ where, for $k=0,\ldots,m-1$:
For $k=0,\ldots,m-1$ consider $\Theta_k=\{u_{k}(1)^{T},\ldots, u_{k}(n)^{T}\}.$ Moreover, for each $k=0,1,\ldots m-1$ define the following matrix by its columns:
\[
\partial \left( k\right) =%
\begin{pmatrix}
u_{k}\left( 1\right) ^{T} \otimes \mathbf{e}_{k}^{T} & \cdots  & u_{k}\left( n\right) ^{T}\otimes \mathbf{e}_{k}^{T}
\end{pmatrix}^{T},
\]
 and in an analogous way, consider $H$ (defined by its columns) as follows:

\[
H=
\begin{pmatrix}
\partial \left( m-1\right) &\partial \left( m-2\right)  & \cdots & \partial \left( 1\right)&\partial \left( 0\right)
\end{pmatrix}.
\]
For $k=0,1,\ldots,m-1 \quad \text{and}\quad j=1,2,\ldots, n $ let us set
\begin{eqnarray}
\label{coord}
u_k(j)=(x_k(1,j),x_k(2,j),\ldots, x_k(n,j)).
\end{eqnarray}

A null linear combination of the columns of $H$ takes the form $$\sum _{k=0  }^{  m-1}{ \sum _{\ell=1 }^{ n }{ \alpha\left(k,\ell\right)\left(u_{k}\left( \ell \right) ^{T} \otimes \mathbf{e_k}^{T}\right) }  }=0. $$
By a routine computation and taking into account (\ref{coord}) and the linear independence of the columns of the matrix $F$ in (\ref{fourier-m}), the last outlined fact implies that
\begin{eqnarray}
\label{sisyem}
\sum _{ \ell =1 }^{ n }{ \alpha (k,\ell ){ x }_{ k }(t,\ell ) } =0,
\end{eqnarray}
for all $t=1,\ldots,n \quad \text{and}\quad k=0,\ldots,m-1.$
Since, for each $k$,  (\ref{sisyem}) is a linear system with $n$ equations and $n$ unknowns (namely $\alpha (k,1),\ldots ,\alpha (k,n)$) with coefficients matrix $B_k,$ where each matrix $B_k=\left(x_{k}(u,v)\right)_{1\leq u,v \leq n},$ we obtain $\alpha(k,\ell)=0$, for $\ell=1,\ldots,n$ and $k=0,\ldots,m-1.$ Therefore the result follows.

\end{proof}

The next result is a direct consequence of (\ref{coefficients}).
\begin{corollary}
\label{choose}

Let $\ell = 1,\ldots, m$, and the matrix $S_{\ell}$ defined as in (\ref{mtcsk}). Consider $a(u,v):=(a_0(u,v),\ldots, a_{m-1}(u,v))^T$ and  $A(u,v)=circ(a(u,v))$. Then,

\begin{eqnarray*}
\begin{pmatrix}
a_k(1,1) & a_k(1,2)&\ldots& a_k(1,m)\\
\vdots&\vdots&\ddots&\vdots\\
a_k(m,1) & a_k(m,2)&\ldots& a_k(m,m)
\end{pmatrix}
&=&
\frac{1}{m}\sum\limits_{\ell=0}^{m-1} S _{\ell}\omega ^{-k\ell},
\end{eqnarray*}

for $k=0,1,\ldots ,m-1.$

\end{corollary}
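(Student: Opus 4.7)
The plan is to reduce the matrix identity to an entrywise scalar identity that is exactly the content of Corollary \ref{fund} applied to each circulant block.

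First I would fix an arbitrary pair $(u,v)$ with $1\leq u,v\leq n$, and consider the first-row vector $a(u,v)=(a_0(u,v),\ldots,a_{m-1}(u,v))^{T}$ of the circulant block $A(u,v)=\mathrm{circ}(a(u,v))$. By Theorem \ref{teorema 22}, specifically formula (\ref{eigenvalues}), the eigenvalues of $A(u,v)$ are given by
\begin{equation*}
\lambda_{\ell}(a(u,v))=\sum_{p=0}^{m-1}a_p(u,v)\,\omega^{p\ell},\qquad \ell=0,1,\ldots,m-1.
\end{equation*}
The key observation is that the right-hand side is exactly $\mathbf{e}_{\ell}^{T}a(u,v)$, which by definition (\ref{mtcsk}) equals $s_{\ell}(u,v)$. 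Thus $s_{\ell}(u,v)$ is precisely the $\ell$-th Fourier coefficient of the sequence $(a_0(u,v),\ldots,a_{m-1}(u,v))$.

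Next, I would apply the inverse formula (\ref{coefficients}) of Corollary \ref{fund} to the vector $a(u,v)$. Since $(s_0(u,v),s_1(u,v),\ldots,s_{m-1}(u,v))$ plays the role of the eigenvalue list $v(a(u,v))$, the corollary yields the scalar identity
\begin{equation*}
a_k(u,v)=\frac{1}{m}\sum_{\ell=0}^{m-1}s_{\ell}(u,v)\,\omega^{-k\ell},\qquad k=0,1,\ldots,m-1.
\end{equation*}
This identity is valid for every choice of $(u,v)$, independently of the other entries, because each block $A(u,v)$ is treated in isolation.

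Finally, I would assemble these scalar equalities into the matrix equality of the statement. Since the $(u,v)$-entry of $S_{\ell}$ is $s_{\ell}(u,v)$, the matrix $\frac{1}{m}\sum_{\ell=0}^{m-1}S_{\ell}\,\omega^{-k\ell}$ has $(u,v)$-entry equal to $\frac{1}{m}\sum_{\ell=0}^{m-1}s_{\ell}(u,v)\,\omega^{-k\ell}$, which by the previous display is $a_k(u,v)$. Collecting the entries yields the stated matrix identity for each $k=0,1,\ldots,m-1$. There is no real obstacle here: the proof is essentially a bookkeeping step in which the scalar DFT inversion formula of Corollary \ref{fund} is applied entrywise, once one recognizes the $s_{\ell}(u,v)$ as the $\ell$-th eigenvalue of the block $A(u,v)$.
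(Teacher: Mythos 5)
Your proposal is correct and is exactly the argument the paper intends: the paper states the corollary as a direct consequence of the inversion formula (\ref{coefficients}), and your proof simply makes this explicit by recognizing $s_{\ell}(u,v)=\mathbf{e}_{\ell}^{T}a(u,v)$ as the $\ell$-th eigenvalue of the circulant block $A(u,v)$ and applying the scalar DFT inversion entrywise. No genuinely different route is taken, and no gap is present.
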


\begin{remark}
\label{remark1}
As a consequence of the above results, the matrix $A$ in (\ref{mtcsa}) is nonnegative if and only if for $k=0,1,\ldots ,m-1,$ the matrix

\begin{eqnarray}
\label{summatrix}
L_k:=\frac{1}{m}\sum\limits_{\ell=0}^{m-1} S _{\ell}\omega ^{-k\ell}
\end{eqnarray}
is nonnegative and, in this case the matrix $S_{0}$ is nonnegative. This latter affirmation is consequence
of the following sum:
\begin{eqnarray*}
\sum\limits_{k=0}^{m-1}L_{k} &=&\frac{1}{m}\sum\limits_{k=0}^{m-1}\sum%
\limits_{\ell =0}^{m-1}S_{\ell }\omega ^{-k\ell } \\
&=&\frac{1}{m}\sum\limits_{\ell =0}^{m-1}S_{\ell
}\sum\limits_{k=0}^{m-1}\omega ^{-k\ell } \\
&=&S_{0}+\frac{1}{m}\sum\limits_{\ell
=1}^{m-1}\sum\limits_{k=0}^{m-1}\omega ^{-k\ell }.
\end{eqnarray*}
In fact, $S_0$ is nonnegative as $\sum\limits_{k=0}^{m-1}L_{k}\geq 0$ and
\[
\sum\limits_{\ell =1}^{m-1}\sum\limits_{k=0}^{m-1}\omega ^{-k\ell
}=\sum\limits_{\ell =1}^{m-1}\frac{1-\left( \omega ^{-\ell }\right) ^{m}}{
1-\omega }=0.
\]

\end{remark}

\begin{remark} \label{15}
Let $q_1,q_2,\ldots,q_n$ be the canonical vectors of $\mathbb{R}^{n}$. For $k=0,1\ldots m-1,$ $1\leq u,v\leq n$ let us denote $\rho_{k}(u,v)=q_{u}^{T}S_{k}q_{v}$. From Corollary \ref{choose}, it is clear that
\begin{eqnarray}
\label{especific}
a_k(u,v)=\sum\limits_{\ell=0}^{m-1}\frac{1}{m} \rho_{\ell}(u,v)\omega ^{-k\ell}.
\end{eqnarray}
\end{remark}

The next result will be important in order to present a constructive criterion.
%In fact, from the spectrum of a matrix partitioned into  blocks it is possible to construct its circulant blocks.

\begin{theorem}
\label{defLk}
For $k=0,1,\ldots, m-1$, let $L_k$ be the matrix defined by (\ref{summatrix}) and consider $A(u,v)$ as the $(u,v)-th$ circulant block matrix of the matrix $A$ in (\ref{mtcsa}) with $1\leq u,v \leq m.$ Then
\begin{eqnarray*}
A(u,v)&=&\left(circ(L_0^{T},L_{m-1}^{T},L_{m-2}^{T},\ldots,L_{1}^{T})\otimes q_u\right)^{T} \otimes q_v\\
&=&circ(q_u^{T}L_0^{T}q_{v},q_u^{T}L_1^{T}q_{v},\ldots,q_u^{T}L_{m-1}^{T}q_{v})
\end{eqnarray*}
\end{theorem}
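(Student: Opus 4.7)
The plan is to prove the second (scalar-circulant) identity first, since the first (Kronecker-product) identity is only a repackaging of the same content. By the definition (\ref{mtcsa}), the block $A(u,v)$ is the right circulant whose first row is $(a_0(u,v),a_1(u,v),\ldots,a_{m-1}(u,v))$, so it is enough to show, for each $k\in\{0,1,\ldots,m-1\}$, that the scalar $a_k(u,v)$ coincides with the $k$-th circulant entry appearing in the statement, namely $q_u^{T}L_{k}^{T}q_v$ (equivalently, the corresponding entry of $L_k$, up to the transpose convention of the statement).

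The scalar identity is immediate from what has already been derived. Remark \ref{15}, equation (\ref{especific}), gives
\[
a_k(u,v)=\frac{1}{m}\sum_{\ell=0}^{m-1}\rho_{\ell}(u,v)\,\omega^{-k\ell}=\frac{1}{m}\sum_{\ell=0}^{m-1}\bigl(q_u^{T}S_{\ell}q_v\bigr)\omega^{-k\ell},
\]
and pulling $q_u^{T}$ and $q_v$ out of the sum, then comparing with the defining identity (\ref{summatrix}) for $L_k$, collapses the right-hand side to $q_u^{T}L_k q_v$. Rewriting this in the transposed form yields the scalar $q_u^{T}L_k^{T}q_v$ as in the stated circulant, which proves the second displayed equality.

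For the first, Kronecker-product expression, my plan is to read $\mathrm{circ}(L_0^{T},L_{m-1}^{T},L_{m-2}^{T},\ldots,L_1^{T})$ slot by slot: its $k$-th block in the first block row is $L_k^{T}$, where the reversed ordering $L_0^{T},L_{m-1}^{T},\ldots,L_1^{T}$ arises because transposition converts a right circulant (of blocks) into a left circulant, reversing the shift direction. Then I would verify that taking the Kronecker product with $q_u$ on the outside and with $q_v$ afterwards, in the way prescribed by the statement, picks from the $k$-th slot exactly the scalar $q_u^{T}L_k^{T}q_v$. Comparison with the circulant form already established closes the argument.

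The main obstacle will not be analytic but combinatorial: one has to keep three conventions consistent at once, namely the right-circulant shift governing $A(u,v)$, the reversed ordering $L_0^{T},L_{m-1}^{T},\ldots,L_1^{T}$ forced by transposing a right circulant, and the precise placement of the canonical vectors $q_u,q_v$ inside the Kronecker products. There is no new spectral content beyond (\ref{especific}) and (\ref{summatrix}); the rest is entrywise bookkeeping.
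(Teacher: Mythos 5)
Your argument is essentially the paper's own proof: it likewise obtains $a_k(u,v)=q_u^{T}L_k q_v$ by combining (\ref{especific}) with the definition (\ref{summatrix}) of $L_k$, and then rewrites $A(u,v)=circ(a_0(u,v),\ldots,a_{m-1}(u,v))$ slot by slot into the Kronecker expression, with the reversed list $L_0^{T},L_{m-1}^{T},\ldots,L_1^{T}$ compensating the outer transpose exactly as in your bookkeeping plan. One caution: $q_u^{T}L_k q_v$ (the $(u,v)$ entry of $L_k$) and $q_u^{T}L_k^{T}q_v$ (its $(v,u)$ entry) are not equal in general, so your ``rewriting in the transposed form'' is not a valid identity; the paper's proof keeps the untransposed scalars throughout, and the $L_k^{T}$ appearing inside the scalar circulant in the theorem statement is best treated as a notational slip to be flagged, not something to be derived.
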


\begin{proof}
From equation (\ref{especific}) in Remark \ref{15},  it is obtained the equality:
\begin{eqnarray*}
\label{especific2}
a_k(u,v)=q_{u}^{T}L_{k}q_{v}.
\end{eqnarray*}
Then,
\begin{eqnarray*}
\label{especific22}
A(u,v)&=&circ(a_0(u,v),a_1(u,v),\ldots,a_{m-1}(u,v))\\
&=&\left(circ(\left(L_{0}^{T}q_{u}\right)^{T},\left(L_{1}^{T}q_{u}\right)^{T},\ldots,\left(L_{m-1}^{T}q_{u}\right)^{T})\right)\otimes q_v\\
&=&\left(circ(L_{0}^{T}q_{u},L_{1}^{T}q_{u},\ldots,L_{m-1}^{T}q_{u})\right)^{T}\otimes q_v\\
&=&\left(circ(L_{0}^{T},L_{m-1}^{T},\ldots,L_{1}^{T})\otimes q_u\right)^{T}\otimes q_v.
\end{eqnarray*}
Therefore the statement is verified.
\end{proof}
From Theorem \ref{main} and Theorem \ref{defLk} the proof of the following result is clear.

\begin{corollary}
\label{converse2}
Let $\left( S_{\ell }^{'}\right) _{\ell =0}^{m-1}$ be $m$ $n$-by-$n$ complex matrices. Then there exists a matrix $A^{'}$ partitioned into blocks where each block is circulant and whose spectrum is given by
\begin{eqnarray}
\label{unionofsets2}
\sigma \left( A^{'}\right) =\bigcup_{\ell=0}^{m-1}\sigma \left( S_{\ell}^{'}\right),
\end{eqnarray}
\end{corollary}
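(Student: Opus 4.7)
The plan is to reverse the correspondence between block circulant matrices with circulant blocks and the family of matrices $S_k$ that appears in Theorem \ref{main}. In the forward direction, given such an $A$, the matrices $S_k$ arise via a DFT of the circulant entries. In the converse direction, I want to start from prescribed matrices $S'_\ell$ and explicitly build an $A'$ whose $S$-matrices coincide with the given ones.

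First, mimicking formula (\ref{summatrix}) from Remark \ref{remark1}, I would define the $n$-by-$n$ matrices
\begin{equation*}
L'_k := \frac{1}{m}\sum_{\ell=0}^{m-1} S'_\ell \, \omega^{-k\ell}, \qquad k=0,1,\ldots,m-1.
\end{equation*}
Then, guided by the structure exhibited in Theorem \ref{defLk}, I would define circulant blocks of order $m$ by
\begin{equation*}
A'(u,v) := circ\bigl(q_u^T (L'_0)^T q_v,\, q_u^T (L'_1)^T q_v,\, \ldots,\, q_u^T (L'_{m-1})^T q_v\bigr),
\end{equation*}
for $1\leq u,v\leq n$, and assemble them into the $mn$-by-$mn$ matrix $A'=(A'(u,v))_{1\leq u,v\leq n}$. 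By construction, $A'$ is partitioned into $n^2$ circulant blocks of size $m$, so it fits the framework of Theorem \ref{main}.

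Applying Theorem \ref{main} to $A'$, its spectrum decomposes as $\sigma(A')=\bigcup_{k=0}^{m-1}\sigma(T_k)$, where $T_k$ is the $n$-by-$n$ matrix whose $(u,v)$-entry equals $\mathbf{e_k}^T a'(u,v) = \sum_{\ell=0}^{m-1} a'_\ell(u,v)\,\omega^{k\ell}$. Since by construction $a'_\ell(u,v)$ equals the appropriate entry of $L'_\ell$, substituting the definition of $L'_\ell$ yields
\begin{equation*}
(T_k)_{u,v} \;=\; \sum_{\ell=0}^{m-1}\frac{1}{m}\sum_{t=0}^{m-1}(S'_t)_{u,v}\,\omega^{-t\ell}\,\omega^{k\ell}
\;=\;\sum_{t=0}^{m-1}(S'_t)_{u,v}\cdot\frac{1}{m}\sum_{\ell=0}^{m-1}\omega^{(k-t)\ell}.
\end{equation*}
The inner sum equals $m\,\delta_{kt}$ by the DFT orthogonality underlying Corollary \ref{fund}, so $T_k=S'_k$ for every $k$, which proves (\ref{unionofsets2}).

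There is no real obstacle here; the argument is essentially DFT inversion. The only thing that requires care is keeping track of the transposes and the sign of $\omega$ in the two formulas so that the forward and inverse transforms cancel cleanly and the assembled $A'$ has precisely the prescribed $S'_\ell$ as its spectral pieces.
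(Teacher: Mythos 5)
Your construction is exactly the paper's: define $L'_k$ by the formula (\ref{summatrix}) with $S'_\ell$ in place of $S_\ell$, build the circulant blocks as in Theorem \ref{defLk}, and invoke Theorem \ref{main} for the spectral decomposition. The only difference is that you spell out, via the orthogonality relation $\tfrac{1}{m}\sum_{\ell}\omega^{(k-t)\ell}=\delta_{kt}$, the step the paper states only implicitly, namely that the $S$-matrices recovered from $A'$ coincide with the prescribed $S'_k$; this is a welcome clarification but not a different argument.
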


\begin{proof}
For $k=0,1, \ldots,m-1$ let us consider the matrix $L_{k}^{'}$ defined as in (\ref{summatrix}) by using $S_{k}^{'}$ instead of $S_{k}$. Define the block matrix $A^{'}$ as in (\ref{mtcsa}) with the circulant blocks defined as in Theorem \ref{defLk} by using $L_{k}^{'}$ instead of $L_{k}$. From Theorem \ref{main} the spectrum of $A^{'}$ is given by the union in (\ref{unionofsets}). Moreover, from Theorem \ref{defLk} the $(u,v)$-th circulant block $A^{'}(u,v)$ of $A^{'}$ is obtained.  Since, this last circulant block and the original circulant block in the $(u,v)$-th position coincide, the union referred in (\ref{unionofsets}) and in (\ref{unionofsets2}) coincide. Thus the matrix $A^{'}$ has spectrum equal to the set in (\ref{unionofsets2}).
\end{proof}

The following example shows that for a given list, there exists a better splitting of this list such that there exists a matrix $A$ that realizes it.
\begin{example}
Let $1, \omega_1, \omega_2$ be the three complex cubic root of the unity. This means $\omega_1=\omega$ and $\omega_2=\omega^{2},$ where $\omega$ is as in (\ref{omega_root}) with $m=3.$ Consider the list $\left\{ 4,-3,\frac{1}{2}+i,\frac{1}{2}-i,\frac{1}{2}+i,\frac{1}{2}-i\right\}$ and choose the partition of the given set as follows:
\[
\left\{ 4,-3\right\} \cup \left\{ \frac{1}{2}+i,\frac{1}{2}-i\right\} \cup
\left\{ \frac{1}{2}+i,\frac{1}{2}-i\right\}.
\]
Therefore, according to this partition it is possible to obtain matrices
\begin{eqnarray*}
S_{0} &=&
         \begin{pmatrix}
          \frac{1}{2} & \frac{7}{2} \\
          \frac{7}{2} & \frac{1}{2}
         \end{pmatrix}
, \text{with eigenvalues }\, 4,-3, \\
S_{1} &=&
        \begin{pmatrix}
          \frac{1}{2} & -1 \\
           1          & \frac{1}{2}
        \end{pmatrix}
,\text{with eigenvalues }\, \tfrac{1}{2}+i,\tfrac{1}{2}-i, \\
S_{2} &=&
\begin{pmatrix}
      \frac{1}{2} & -1 \\
                1 & \frac{1}{2}
\end{pmatrix}%
,\text{with eigenvalues }\, \tfrac{1}{2}+i,\tfrac{1}{2}-i.
\end{eqnarray*}

Thus, as $\omega_{1}=\frac{-1+i\sqrt{3}}{2}$ and $\omega_{2}=\frac{-1-i\sqrt{3}}{2},$ we have:

\begin{eqnarray*}
3L_{0} &=&S_{0}+S_{1}+S_{2} =
\begin{pmatrix}
\frac{3}{2} & \frac{3}{2} \\
\frac{11}{2} & \frac{3}{2}%
\end{pmatrix},
%=\allowbreak
%\begin{pmatrix}
%\frac{1}{2} & \frac{1}{2} \\
%\frac{11}{6} & \frac{1}{2}%
%\end{pmatrix}
\\
3L_{1} &=&S_{0}+\omega_2S_{1}+\omega_1 S_{2} =
\begin{pmatrix}
0 & \frac{9}{2} \\
\frac{5}{2} & 0%
\end{pmatrix},
\\
3L_{2} &=&S_{0}+\omega_1 S_{1}+\omega_2S_{2} =
\begin{pmatrix}
0 & \frac{9}{2} \\
\frac{5}{2} & 0%
\end{pmatrix}.
\end{eqnarray*}
Then, the circulant blocks are:
\begin{align*}
A(1,1)&=circ\left(\frac{1}{2},0,0\right),\
A(1,2)=circ\left(\frac{1}{2},\frac{3}{2},\frac{3}{2}\right),\\
A(2,2)&=circ\left(\frac{1}{2},0,0\right),\
A(2,1)=circ\left(\frac{11}{6},\frac{5}{6},\frac{5}{6}\right).\\
\end{align*}
Therefore, by Theorem \ref{main} we obtain the nonnegative matrix

\[
A=%
\begin{pmatrix}
\frac{1}{2} & 0 & 0 & \frac{1}{2} & \frac{3}{2} & \frac{3}{2} \\
0 & \frac{1}{2} & 0 & \frac{3}{2} & \frac{1}{2} & \frac{3}{2} \\
0 & 0 & \frac{1}{2} & \frac{3}{2} & \frac{3}{2} & \frac{1}{2} \\
\frac{11}{6} & \frac{5}{6} & \frac{5}{6} & \frac{1}{2} & 0 & 0 \\
\frac{5}{6} & \frac{11}{6} & \frac{5}{6} & 0 & \frac{1}{2} & 0 \\
\frac{5}{6} & \frac{5}{6} & \frac{11}{6} & 0 & 0 & \frac{1}{2}%
\end{pmatrix}
\]\\
whose spectrum is $\left\{ 4,-3,\frac{1}{2}+i,\frac{1}{2}-i,\frac{1}{2}+i,\frac{1}{2}-i\right\}.$ Nevertheless, considering another partition of the initial set

\[ \left\{ 4,\frac{1}{2}+i,\frac{1%
}{2}-i\right\} \cup \left\{ -3,\frac{1}{2}+i,\frac{1}{2}-i\right\} ,\]
by the above procedure we obtain,
\[
{\small
 S_{1}=\frac{1}{3}
\left(\begin{array}{ccc}
-2 & \sqrt{3}-\frac{7}{2} & -\sqrt{3}-\frac{7}{2} \\
-\sqrt{3}-\frac{7}{2} & -2 & \sqrt{3}-\frac{7}{2} \\
\sqrt{3}-\frac{7}{2} & -\sqrt{3}-\frac{7}{2} & -2
\end{array}
\right)
=\allowbreak
\left(\begin{array}{ccc}
-\frac{2}{3} & \frac{\sqrt{3}}{3}-\frac{7}{6} & -\frac{\sqrt{3}}{3}-\frac{7%
}{6} \\
-\frac{\sqrt{3}}{3}-\frac{7}{6} & -\frac{2}{3} & \frac{\sqrt{3}}{3}-\frac{7%
}{6} \\
\frac{\sqrt{3}}{3}-\frac{7}{6} & -\frac{\sqrt{3}}{3}-\frac{7}{6} & -\frac{2%
}{3}
\end{array}
\right)
}\]
with eigenvalues $ \frac{1}{2}+i,\frac{1}{2}-i,-3,$ and
\[
{\small S_{0}=\frac{1}{3}%
\left(\begin{array}{ccc}
5 & \sqrt{3}+\frac{7}{2} & -\sqrt{3}+\frac{7}{2} \\
-\sqrt{3}+\frac{7}{2} & 5 & \sqrt{3}+\frac{7}{2} \\
\sqrt{3}+\frac{7}{2} & -\sqrt{3}+\frac{7}{2} & 5%
\end{array}
\right)
=\allowbreak
\left(\begin{array}{ccc}
\frac{5}{3} & \frac{\sqrt{3}}{3}+\frac{7}{6} & \frac{7}{6}-\frac{\sqrt{3}}{3} \\
\frac{7}{6}-\frac{\sqrt{3}}{3} & \frac{5}{3} & \frac{\sqrt{3}}{3}+\frac{7}{%
6} \\
\frac{\sqrt{3}}{3}+\frac{7}{6} & \frac{7}{6}-\frac{\sqrt{3}}{3}& \frac{5}{%
3}
\end{array}
\right)}\]
with eigenvalues $\frac{1}{2}+i,\frac{1}{2}-i,4.$ Then

\[S_{0}+S_{1}
=\allowbreak
\small{\begin{pmatrix}
1 & \frac{2\sqrt{3}}{3} & -\frac{2\sqrt{3}}{3} \\
-\frac{2\sqrt{3}}{3} & 1 & \frac{2\sqrt{3}}{3} \\
\frac{2\sqrt{3}}{3} & -\frac{2\sqrt{3}}{3} & 1%
\end{pmatrix}}
\]
\\
which is not a nonnegative matrix. Therefore, using the previous procedure, the second partition does not allow us to obtain a nonnegative matrix with the given spectrum.
\end{example}

\section{On structured matrices partitioned into circulant blocks matrices}

In this section we search conditions to obtain a given structure on a partitioned into blocks matrix $A$ with circulant blocks as in (\ref{mtcsa}) using the structure of the matrices $S_{k}$ in (\ref{mtcsk}).
Here, a matrix partitioned into blocks is called \textit{block permutative matrix} when all its row blocks ( up to the first one) are permutations of precisely its first row block.

\begin{theorem}
Let $A$ be the matrix partitioned into blocks as defined in (\ref{mtcsa}). Let $S_k$ be the class of matrices defined in (\ref{mtcsk}), for $k=1,2,\ldots,m-1.$
Then
\begin{enumerate}
    \item If $S_k$ is diagonal for $k=0,1,\ldots,m-1$, then the matrix $A$ is a diagonal block matrix with circulant blocks.
    \item The matrix $A$ is a block circulant matrix, (see (\ref{circbloques})), with circulant blocks if and only if $S_k$ is a circulant matrix for all $k=0,1,\ldots,m-1.$
    \item The matrix $A$ is a block permutative matrix with circulant blocks
    if and only if $S_k$ are permutatively equivalent matrices, for all $k=0,1,\ldots,m-1.$
    \item The matrix $A$ is a symmetric real matrix partitioned into circulant blocks
      if and only if $S_k$ is a real and symmetric matrix for all $k=0,1,\ldots,m-1.$
\end{enumerate}
\end{theorem}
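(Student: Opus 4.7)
The plan is to exploit the discrete Fourier duality from Corollary \ref{choose} and Remark \ref{15}: for each fixed block position $(u,v)$, the first-row vector $(a_0(u,v),\ldots,a_{m-1}(u,v))$ of the circulant block $A(u,v)$ and the entry sequence $(\rho_0(u,v),\ldots,\rho_{m-1}(u,v))$, where $\rho_k(u,v)=q_u^{T}S_k q_v$, are related by an invertible DFT of order $m$. Hence any pointwise condition holding for \emph{every} index $\ell$ on the $a_\ell(u,v)$ side is equivalent, via the DFT, to a corresponding condition holding for \emph{every} index $k$ on the $\rho_k(u,v)$ side. Each of the four parts will be obtained by rephrasing the block-level structural property of $A$ as such an entrywise condition and passing it through this Fourier equivalence.

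Concretely, for part 1, each $S_k$ being diagonal means $\rho_k(u,v)=0$ for all $u\ne v$ and all $k$; inverting the DFT at each off-diagonal block position gives $a_\ell(u,v)=0$ for all $\ell$, so $A(u,v)=0$ whenever $u\ne v$ and $A$ is block diagonal with circulant diagonal blocks. For part 2, block-circulantness of $A$ amounts to $a_\ell(u,v)=a_\ell(u+1,v+1)$ (indices modulo $n$) for every $\ell$, which the DFT converts to $\rho_k(u,v)=\rho_k(u+1,v+1)$ for every $k$, equivalently each $S_k$ is circulant; the converse is obtained by applying the inverse DFT in the opposite direction. For part 3, the existence of a common tuple $\tau=(\tau_1,\ldots,\tau_n)$ with $\tau_1=\mathrm{id}$ such that $A(u,j)=A(1,\tau_u(j))$ is, entrywise in $\ell$, the identity $a_\ell(u,j)=a_\ell(1,\tau_u(j))$, and by the DFT equivalent to $\rho_k(u,j)=\rho_k(1,\tau_u(j))$ for every $k$, which is exactly the condition that every $S_k$ is $\tau$-permutative for the same $\tau$. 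For part 4, the realness of every $a_\ell(u,v)$ together with the entrywise symmetry $a_\ell(u,v)=a_\ell(v,u)$ translates through the DFT to realness of every $\rho_k(u,v)$ together with $\rho_k(u,v)=\rho_k(v,u)$, i.e., each $S_k$ is real and symmetric.

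The main bookkeeping burden is keeping the DFT index $k$ (ranging over $\{0,\ldots,m-1\}$) separate from the block-position indices (ranging over $\{1,\ldots,n\}$), and, in part 4, matching the symmetry of $A$ read at the block level with the entry-level identities $a_\ell(u,v)=a_\ell(v,u)$ that are compatible with the symmetric circulant structure of the blocks. Once the Fourier inversion formulas from Corollary \ref{choose} and Remark \ref{15} are invoked systematically, each part reduces to a routine indexing verification, and no new spectral input beyond Theorem \ref{main} is required.
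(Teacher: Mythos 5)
Your treatment of parts 1--3 is correct and is essentially the paper's own argument in Fourier-dictionary form: the matrices $L_k$ of (\ref{summatrix}) together with Theorem \ref{defLk} are precisely the inverse-DFT relation $a_k(u,v)=q_u^{T}L_k q_v$ that you invoke through Corollary \ref{choose} and Remark \ref{15}, and your pointwise transfer of ``off-diagonal blocks vanish'', ``block depends only on $v-u \pmod n$'', and ``rows are $\tau$-permutations of the first row'' is exactly how the paper argues block by block (part 1 is stated only as one implication, and both you and the paper prove just that).

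Part 4 is where your plan has a genuine gap, and it is precisely the point your closing sentence waves away as bookkeeping. First, symmetry of $A$ at the matrix level is $A(v,u)=A(u,v)^{T}$, and transposing a circulant block reverses its first row; the entry-level identity it gives is $a_\ell(v,u)=a_{(m-\ell)\bmod m}(u,v)$, not $a_\ell(u,v)=a_\ell(v,u)$ (the latter says $A(u,v)=A(v,u)$ as blocks, which amounts to symmetry of $A$ only when each block is itself symmetric). Second, realness is not a pointwise DFT-invariant: $a_\ell(u,v)\in\mathbb{R}$ for all $\ell$ is equivalent to the conjugate symmetry $\rho_{m-k}(u,v)=\overline{\rho_k(u,v)}$ across the index $k$, not to each $\rho_k(u,v)$ being real, and conversely realness of all $\rho_k(u,v)$ only yields $a_{m-\ell}(u,v)=\overline{a_\ell(u,v)}$. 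So what your transfer actually extracts from ``$A$ real symmetric with circulant blocks'' is that each $S_k$ is Hermitian with $S_{m-k}=\overline{S_k}$, which is weaker than ``real symmetric'' (e.g.\ zero diagonal blocks, $A(1,2)=circ(0,1,0)$, $A(2,1)=circ(0,0,1)$ give a real symmetric $A$ with $S_1=\left(\begin{smallmatrix}0&\omega\\ \omega^{2}&0\end{smallmatrix}\right)$, not real symmetric --- the statement is being read with the blocks themselves symmetric circulants), and in the converse direction real symmetric $S_k$ alone do not hand you realness of the $a_\ell(u,v)$. The paper closes exactly this hole with a non-Fourier ingredient: real symmetric $S_k$ force every block $A(u,v)$ to have only real eigenvalues, hence by \cite{RSGuo} to be a symmetric circulant, and then symmetry of each $L_k$ gives $A(u,v)=A(v,u)$, whence $A^{T}=A$. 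You need an argument of that type (or an explicit reduction to symmetric circulant blocks); the purely pointwise Fourier dictionary you propose cannot supply it.
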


\begin{proof}
Suppose that for all $\ell =0,1,\ldots ,m-1$ the matrices $%
S_{\ell }$ are diagonal. We will prove that $A=\left( A\left( u,v\right)
\right) $ is a diagonal block matrix with circulant blocks. It is clear that for $%
u \neq v$, $\ q_{u}^{T}S_{\ell }q_{v}=0$ therefore, for all $k=0,1,\ldots.m-1$
\[
q_{u}^{T}L_{k}q_{v}=\tfrac{1}{m}\sum\limits_{\ell =0}^{m-1}\omega ^{-k\ell
}q_{u}^{T}S_{\ell }q_{v}=0.
\]%
Then, for $u\neq v,$  by Theorem \ref{defLk},  $A\left( u,v\right) =0.$  Thus  $A$ is a diagonal block matrix with circulant blocks.

Suppose that $A$ is a block circulant matrix with circulant blocks, since the entries
of $S_{k}$ \ follow the distribution of the blocks of $A$ the matrices
$S_{k}$ are circulant.
Conversely, assume that for all $k=0,1,\ldots ,m-1,\ S_{k}$ is circulant,
then for $k=1,2,\ldots,m$ the matrix $L_k$ defined in (\ref{summatrix})
is also circulant. Let us suppose that
\begin{equation*}
L_{k}=circ\left(\varrho _{0k}, \varrho _{1k}, \ldots,   \varrho _{(n-1)k}\right),
\end{equation*}%
then%
\begin{equation*}
q_{u}^{T}L_{k}q_{v}=\left\{
\begin{tabular}{ll}
$\varrho _{(v-u)k}$ & $1\leq u\leq v\leq n,$ \\
$\varrho _{(n-u+v)k}$ & $1\leq v<u\leq n;$%
\end{tabular}%
\right.
\end{equation*}%
since
\begin{eqnarray*}
A\left(u,v\right)  &= &circ(q_{u}^{T}L_{0}q_{v},q_{u}^{T}L_{1}q_{v},\ldots
,q_{u}^{T}L_{m-1}q_{v}) \\
&=&\left \{
\begin{tabular}{ll}
$circ\left( \varrho _{(v-u)0},\varrho _{(v-u)1},\ldots ,\varrho
_{(v-u)(m-1)}\right) $ & $1\leq u\leq v\leq n,$ \\
$circ\left( \varrho _{(n-u+v)0},\varrho _{(n-u+v)1},\ldots ,\varrho
_{(n-u+v)(m-1)}\right) $ & $1\leq v<u\leq n.$%
\end{tabular}
\right.
\end{eqnarray*}
Thus the matrix $A=(A(u,v))$ partitioned into blocks is block circulant.

Let us suppose now that $A$ is a block permutative matrix. Then there exits a
permutation vector
\begin{equation}
\nu =\left( \nu _{0},\nu _{1},\ldots ,\nu _{n-1}\right)   \label{vectper}
\end{equation}%
with $\nu _{0}=id$, such that
\begin{equation*}
A=%
\begin{pmatrix}
A\left( 1,1\right)  & A\left( 1,2\right)  & \ldots  & A\left( 1,n\right)  \\
A\left( 1,\nu _{1}\left( 1\right) \right)  & A\left( 1,\nu _{1}\left(
2\right) \right)  & \ldots  & A\left( 1,\nu _{1}\left( n\right) \right)  \\
\vdots  & \vdots  & \ddots  & \vdots \\
A\left( 1,\nu _{n-1}\left( 1\right) \right)  & \ldots  & \ldots & A\left( 1,\nu
_{n-1}\left( n\right) \right)
\end{pmatrix}.
\end{equation*}%
Thus
\begin{equation*}
S_{k}=\left( s_{k}\left( i,j\right) \right) =%
\begin{pmatrix}
s_{k}\left( 1,1\right)  & s_{k}\left( 1,2\right)  & \ldots  & s_{k}\left(
1,n\right)  \\
s_{k}\left( 1,\nu _{1}\left( 1\right) \right)  & s_{k}\left( 1,\nu
_{1}\left( 2\right) \right)  & \ldots  & s_{k}\left( 1,\nu _{1}\left(
n\right) \right)  \\
\vdots  & \vdots  & \ddots  & \vdots \\
s_{k}\left( 1,\nu _{n-1}\left( 1\right) \right)  & \ldots  & \ldots & s_{k}\left(
1,\nu _{n-1}\left( n\right) \right)
\end{pmatrix}%
\end{equation*}%
this means that the set of matrices $S_{k}$ are permutatively equivalent.
Conversely, if $S_{k}$ are permutatively equivalent matrices for all $%
k=0,1,\ldots ,m-1,\ $ then the matrix $L_k$ defined in (\ref{summatrix}) is also permutative. Let us suppose that there exist a vector $b=\left(
b_{1k},b_{2k},\ldots ,b_{nk}\right) $ and a permutation vector as in (\ref%
{vectper}) such that

\begin{equation*}
L_{k}=
\begin{pmatrix}
b_{1,k} & b_{2,k} & \ldots  & b_{n,k} \\
b_{\nu _{1}( 1) },_{k} & b_{\nu _{1}(2)},_{k} &
\ldots  & b_{\nu _{1}\left( n\right) },_{k} \\
\vdots  & \vdots  & \ddots  & \vdots \\
b_{\nu _{n-1}\left( 1\right) },_{k} & b_{\nu _{n-1}\left( 2\right) },_{k} & \ldots  & b_{\nu _{n-1}\left( n\right) },_{k}%
\end{pmatrix}.
\end{equation*}
Then
\begin{equation*}
q_{u}^{T}L_{k}q_{v}=b_{\nu _{u-1}\left( v\right) },_{k},
\end{equation*}
since
\begin{eqnarray*}
A\left( u,v\right)  &=&circ(q_{u}^{T}L_{0}q_{v},q_{u}^{T}L_{1}q_{v},\ldots
,q_{u}^{T}L_{m-1}q_{v}) \\
&=&circ\left( b_{\nu _{u-1}\left( v\right) },_{0},b_{\nu _{u-1}\left(
v\right) },_{1},\ldots ,b_{\nu _{u-1}\left( v\right) },_{m-1}\right).
\end{eqnarray*}

%From now on the contrary we say we write $\left( a_{0},a_{1},\ldots
%,a_{m-1}\right) $ to the matrix $circ\left( a_{0},a_{1},\ldots
%,a_{m-1}\right) $

From now on, in order to simplify the notation and unless we say the contrary, it is written $\left( a_{0},a_{1},\ldots ,a_{m-1}\right)$ instead of  $circ\left( a_{0},a_{1},\ldots ,a_{m-1}\right). $ Thus, $A$ can be constructed as follows:

\begin{equation*}
{\tiny \begin{pmatrix}
\left( b_{1,0},\ldots ,b_{1,m-1}\right)  & \left(
b_{2,0},\ldots ,b_{2,m-1}\right)  & \ldots  & \left(
b_{n,0},\ldots ,b_{n,m-1}\right)  \\
\left( b_{\nu _{1}\left( 1\right) },_{0},\ldots ,b_{\nu _{1}\left( 1\right) },_{m-1}\right)  & \left( b_{\nu
_{1}\left( 2\right) },_{0},\ldots ,b_{\nu
_{1}\left( 2\right) },_{m-1}\right)  & \ldots  & \vdots  \\
\vdots  & \vdots & \ddots  & \vdots \\
\left( b_{\nu _{n-1}\left( 1\right) },_{0},\ldots ,b_{\nu _{n-1}\left( 1\right) },_{m-1}\right)  & \ldots & \ldots&
\left( b_{\nu _{n-1}\left( n\right) },_{0},\ldots ,b_{\nu _{n-1}\left( n\right) },_{m-1}\right)
\end{pmatrix}}
\end{equation*}
Therefore, $A$ is a block permutative matrix with circulant blocks.\\
In order to prove Item 4., suppose that $A$ is symmetric partitioned into circulant block matrices.
Since the entries
of $S_{k}$ \ follow the distribution of the blocks of $A$ then $S_{k}$ is symmetric, for all $k=0,1,\ldots ,m-1.$
Conversely, assume that for all $k=0,1,\ldots ,m-1,\ S_{k}$ is a symmetric
real matrix. Since for all $k=0,1,\ldots ,m-1,\ s_{k}\left( u,v\right) $ is
an eigenvalue of $A\left( u,v\right) $, then the circulant matrices $A\left(
u,v\right) $ have only real eigenvalues, so they should be symmetric (see in
\cite{RSGuo}). Moreover,  for all $k=0,1,\ldots ,m-1$, the $L_k$ is the matrix defined in (\ref{summatrix}) and is symmetric. Suppose that
\begin{equation*}
L_{k}=%
\begin{pmatrix}
\varrho _{k}\left( 1,1\right)  & \varrho _{k}\left( 1,2\right)  & \ldots  &
\varrho _{k}\left( 1,n\right)  \\
\varrho _{k}\left( 1,2\right)  & \varrho _{k}\left( 2,2\right) & \ldots  &
\varrho _{k}\left( 2,n\right)  \\
\vdots  & \vdots  & \ddots  & \vdots\\
\varrho _{k}\left( 1,n\right)  & \ldots  & \ldots & \varrho _{k}\left( n,n\right)
\end{pmatrix}%
\end{equation*}%
then%
\begin{eqnarray*}
A\left( v,u\right)  &=&circ(q_{v}^{T}L_{0}q_{u},q_{v}^{T}L_{1}q_{u},\ldots
,q_{v}^{T}L_{m-1}q_{u}) \\
&=&circ\left( \varrho _{0}\left( v,u\right) ,\varrho _{1}\left( v,u\right)
,\ldots ,\varrho _{m-1}\left( v,u\right) \right)  \\
&=&circ\left( \varrho _{0}\left( u,v\right) ,\varrho _{1}\left( u,v\right)
,\ldots ,\varrho _{m-1}\left( u,v\right) \right)  \\
&=&A\left( u,v\right) .
\end{eqnarray*}%
Therefore,
\[
A^{T} =\left( A\left( u,v\right) \right)^{T}=
\left( A\left( v,u\right)^{T} \right)=
\left( A\left( v,u\right) \right)=
A.
\]

Thus, the matrix A is symmetric.
\end{proof}

\section{An inverse problem related to block circulant matrices with circulant blocks}

In this section we study the Guo index for some structured matrices. Namely, we dedicate our attention to block circulant matrices with circulant blocks, which are a type of permutative matrices. To this purpose we  rise to the following inverse problem.
\begin{problem}
\label{GUO2}
Let $q_1,q_2,\ldots,q_m$ be as in Remark \ref{15} and $E=(\varepsilon_{ij})$ be an $n$-by-$m$ matrix where the multiset $\left\{ E\right\} $ and is formed by the entries of $E$ is closed under complex conjugation. Suppose that $\varepsilon_{11}$ is positive and has the largest absolute value among the absolute values of entries of $E$, the entries of the first column of $E$ are the eigenvalues of a nonnegative circulant matrix $S_0$, and for $\ell=1,\ldots,\left\lfloor \tfrac{m}{2}\right\rfloor$,
\begin{eqnarray}
\label{conjugatecondition3}
Eq_{\ell+1}=\overline{Eq_{m-\ell+1}}
\end{eqnarray}
that is, the entries of the $(m-\ell+1)$-th column of $E$ are the corresponding complex conjugate of the $(\ell+1)$-th column of $E$, for $\ell=1,\ldots,\left \lfloor \tfrac{m}{2} \right \rfloor.$ Note that for $m=2h$ the condition in (\ref{conjugatecondition3}) implies that the column $Eq_{h+1}$ of $E$ has real entries. Which condition (or conditions) is (or are) sufficient for the existence of a nonnegative matrix $A$ partitioned into blocks where each block is a circulant matrix whose spectrum equals to $\left \{ E \right \}$?
\end{problem}
In order to give a response to this question one needs to introduce the following DFT matrix

\begin{equation*}
G =\frac{1}{\sqrt{n}}%
\begin{pmatrix}
1 & 1 & 1 & \ldots  & 1 & 1 \\
1 & \tau  & \tau ^{2} & \ldots  & \tau ^{n-2} & \tau ^{n-1} \\
1 & \tau ^{2} & \tau ^{4} & \ldots  & \vdots & \vdots  \\
\vdots  & \vdots  & \vdots  & \ddots  & \vdots  & \vdots \\
%\vdots & \vdots & \vdots & \ddots  & \vdots & \vdots  \\
1 & \tau ^{n-1} & \tau ^{2\left( n-1\right) } & \ldots  & \ldots & \tau ^{\left(
n-1\right) ^{2}}%
\end{pmatrix}%
.
\end{equation*}
where
\begin{equation}
\tau =\cos\left(\frac{2\pi }{n}\right)+i\sin \left(\frac{2\pi }{n}\right).  \label{tau}
\end{equation}
Moreover, it is necessary to define \textit{circulant real list}.
\begin{definition}
Given the list $\Lambda =\left( \lambda_0 ,\lambda _{1},\lambda
_{2},\ldots ,\lambda _{n-1}\right) $ we say that $%
\Lambda $ is a circulant real list if the following conditions hold:
\begin{enumerate}
\item $\lambda _{0}=\rho=\max \left \{ \left \vert \lambda _{j}\right\vert
:j=1,2,\ldots ,n\right\}$ and
\item $\lambda _{n-k}=\overline{\lambda _{k}}$, for $k=1,2,\ldots ,n-1.$
\end{enumerate}
\end{definition}

The following result gives the Guo's Index for circulant matrices.
\begin{theorem}\cite[Theorem 4]{RSGuo} \label{conditions}
Let $\Lambda =\left( \lambda _{0},\lambda _{1},\ldots ,\lambda
_{n-1}\right) $ be a circulant real list and consider the set
\begin{equation*}
\small{\mathcal{P}=\left\{ \alpha \in \mathbb{S}_{n}\text{:\ }\alpha \left( 0\right) =0 \ \text{and} \ \alpha
\left( n-k\right) =n-\alpha \left( k\right),k\in \{1,2,\ldots,n-1\}\right\} . \label{p}}
\end{equation*}
then necessary and
sufficient conditions for $\Lambda $ to be the spectrum of a real circulant
matrix are given by (\ref{ns1}) and (\ref{ns2}) as follows
\begin{equation}
\lambda _{0}\geq \min_{\alpha \in \mathcal{P}}\max_{0\leq k\leq 2m}
\left\{
\begin{array}{c}
-2\sum\limits_{j=1}^{m}\rm{Re} \lambda _{\alpha\left(j\right)}\cos \frac{2k j\pi }{2m+1}- \\
\qquad -2\sum\limits_{j=1}^{m}\rm{Im}\lambda _{\alpha\left(j\right)} \sin\frac{2k j\pi }{2m+1}
\end{array}
\label{ns1}
\right.
\end{equation}
whenever $n=2m-1$, and
\begin{equation}
\lambda _{0}\geq \min_{\alpha \in \mathcal{P}}\max_{0\leq k\leq 2m+1}
\left\{
\begin{array}{c}
-2\sum\limits_{j=1}^{m-1}\rm{Re}\lambda _{\alpha\left(j\right)}\cos \frac{
2k j\pi }{m+1}-\left( -1\right) ^{k}\lambda _{m}- \\
\qquad -2\sum\limits_{j=1}^{m-1}\rm{Im}\lambda _{\alpha\left(j\right)}
\sin \frac{2k j\pi }{m+1}%
\end{array}%
  \label{ns2}
  \right.
\end{equation}
whenever $n=2m-2$.
\end{theorem}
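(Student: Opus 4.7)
The plan is to pass from the nonnegativity of a real circulant matrix to explicit pointwise inequalities on the inverse DFT of its spectrum, and to parameterise the orderings of $\Lambda$ that produce a real matrix by the set $\mathcal{P}$.

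First I would fix a bijection $\alpha:\{0,1,\ldots,n-1\}\to\{0,1,\ldots,n-1\}$ and, by Corollary~\ref{fund}, read off that placing the eigenvalues in the order $(\lambda_{\alpha(0)},\ldots,\lambda_{\alpha(n-1)})$ produces the circulant matrix whose first row is
\[
a_{k}=\frac{1}{n}\sum_{\ell=0}^{n-1}\lambda_{\alpha(\ell)}\,\tau^{-k\ell},\qquad k=0,1,\ldots,n-1,
\]
with $\tau$ as in (\ref{tau}). A short Fourier-inversion argument then shows that $a_k\in\mathbb{R}$ for every $k$ if and only if $\lambda_{\alpha(n-\ell)}=\overline{\lambda_{\alpha(\ell)}}$ for all $\ell$; combined with the convention $\alpha(0)=0$ (pinning the Perron root, which is real and already the largest), this is precisely the condition $\alpha\in\mathcal{P}$. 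Hence $\mathcal{P}$ enumerates exactly the orderings yielding a real circulant realisation.

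Next, for a fixed $\alpha\in\mathcal{P}$, I would collapse the conjugate pairs in the sum above using $\tau^{-k\ell}+\tau^{k\ell}=2\cos(2\pi k\ell/n)$ and the analogous sine identity, reducing $n a_k$ to $\lambda_0$ plus a real trigonometric combination of $\mathrm{Re}\,\lambda_{\alpha(j)}$ and $\mathrm{Im}\,\lambda_{\alpha(j)}$; in the even case the self-conjugate middle eigenvalue $\lambda_{\alpha(m)}$ contributes an extra $(-1)^k\lambda_{\alpha(m)}$ term. The requirement $a_k\geq 0$ for every $k$ then rearranges to $\lambda_0$ being at least the negative of that trigonometric sum, which is exactly the right-hand side appearing inside the braces in (\ref{ns1}) or (\ref{ns2}).

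Finally, necessity follows because any nonnegative circulant realisation arises from some ordering $\alpha\in\mathcal{P}$ and must therefore satisfy the $k$-wise lower bound for that particular $\alpha$; sufficiency follows by an explicit construction, namely the circulant matrix whose first row is the inverse DFT of the optimally ordered list, which is real since $\alpha\in\mathcal{P}$ and nonnegative by the assumed inequality. Taking the max over $k$ pins down the smallest $\lambda_0$ compatible with a given $\alpha$, and the min over $\alpha\in\mathcal{P}$ produces the Guo index. The hard part will not be the Fourier manipulation but the bookkeeping of the case split $n=2m-1$ versus $n=2m-2$, the correct treatment of the self-conjugate middle eigenvalue in the even case, and verifying that $\mathcal{P}$ exactly (not merely contains) the set of admissible orderings, so that the $\min_{\alpha\in\mathcal{P}}$ is genuinely attained and the equivalence is sharp.
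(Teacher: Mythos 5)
This theorem is not proved in the paper at all: it is quoted from Rojo and Soto \cite[Theorem 4]{RSGuo}, so there is no internal proof to compare against, and your sketch follows exactly the route of the cited source. Namely: invert the DFT for each ordering of $\Lambda$, note that the resulting first row is real precisely when the ordering respects conjugate symmetry with the Perron root at position $0$ (which is how $\mathcal{P}$ is defined), collapse conjugate pairs to rewrite each entry as $\lambda_0$ plus a real trigonometric sum in $\mathrm{Re}\,\lambda_{\alpha(j)}$, $\mathrm{Im}\,\lambda_{\alpha(j)}$ (with the extra $(-1)^k$ contribution of the self-conjugate middle eigenvalue in the even case), and translate entrywise nonnegativity into the stated min--max inequality over the finite set $\mathcal{P}$. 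The only steps you pass over lightly sit in the necessity direction: you should justify that any nonnegative circulant realization must carry $\lambda_0$ at DFT position $0$ (the common row sum is an eigenvalue that dominates the modulus of every other eigenvalue, hence equals the Perron root), and that when eigenvalues repeat, an ordering producing a real matrix can be relabelled into one lying in $\mathcal{P}$ without changing the matrix; both are easy but are exactly what makes the condition necessary and the parameterisation by $\mathcal{P}$ exhaustive.
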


When the matrix is block circulant with circulant blocks we have the following result.

\begin{theorem}
\label{strongguo}
Let $E=(\varepsilon_{ij})$ be an $n$-by-$m$ matrix such that the set formed by its first column is the spectrum of a nonnegative circulant matrix $S_0.$ The multiset $\left\{ E\right\} $ formed by the entries of $E$ is closed under complex conjugation. $\varepsilon_{11}$ is positive, and has the largest absolute value among the absolute values of the entries of $E$, and for $\ell=1,\ldots,\left \lfloor \tfrac{m}{2} \right \rfloor$
\begin{eqnarray}
\label{evenconjugate}
Eq_{\ell+1}=\overline{Eq_{m-\ell+1}}.
\end{eqnarray}
If
\begin{eqnarray}
\label{new1}
\varepsilon_{11}\geq \Phi
\end{eqnarray}
with \[\Phi=\max_{k\in \{0, \ldots, m\}}
\left(\sum\limits_{p=1}^{n-1}\varepsilon_{(p+1)1}\tau^{-jp}+\sum_{\ell =1}^{m-1}\omega ^{-k\ell}\varepsilon_{1(\ell+1)}+\sum_{\ell =1}^{m-1}\sum\limits_{p=1}^{n-1}\varepsilon_{(p+1)(\ell+1)}\omega ^{-k\ell}\tau^{-jp}              \right)\]
for all $k=0, 1, \ldots, m-1,$
then $\left \{ E \right \}$ is the spectrum of a nonnegative block circulant matrix $A$ whose blocks are circulant.
\end{theorem}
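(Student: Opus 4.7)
The plan is to realize $\{E\}$ by applying the inverse construction of Corollary \ref{converse2}, using the columns of $E$ to prescribe the eigenvalue lists of $m$ circulant ``building blocks'' $S_0, S_1, \ldots, S_{m-1}$ of order $n$, and then to verify the nonnegativity of the resulting block matrix through Remark \ref{remark1}.

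For each $\ell = 0, 1, \ldots, m-1$, I would take $S_\ell$ to be the (generally complex) circulant matrix of order $n$ whose eigenvalue list is the $(\ell+1)$-th column of $E$, namely $(\varepsilon_{1(\ell+1)}, \varepsilon_{2(\ell+1)}, \ldots, \varepsilon_{n(\ell+1)})$. By Corollary \ref{fund}, such an $S_\ell$ is uniquely given by $S_\ell = \mathrm{circ}(s_0(\ell),\ldots,s_{n-1}(\ell))$ with
\begin{equation*}
s_j(\ell) \;=\; \frac{1}{n}\sum_{p=0}^{n-1} \varepsilon_{(p+1)(\ell+1)}\,\tau^{-jp},
\end{equation*}
where $\tau$ is the root of unity in (\ref{tau}); for $\ell = 0$ this recovers the given nonnegative circulant $S_0$. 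Corollary \ref{converse2} then produces a matrix $A$ partitioned into circulant blocks with $\sigma(A) = \bigcup_{\ell=0}^{m-1}\sigma(S_\ell) = \{E\}$, and item 2 of the structural theorem in Section 4 guarantees that $A$ is block circulant with circulant blocks, since every $S_\ell$ is circulant.

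It remains to show that $A$ is real and nonnegative. By Remark \ref{remark1} this is equivalent to each $L_k = \frac{1}{m}\sum_{\ell=0}^{m-1} S_\ell\,\omega^{-k\ell}$ being a real nonnegative matrix. Since every $S_\ell$ is circulant, so is $L_k$, and substituting the formula for $s_j(\ell)$ gives
\begin{equation*}
(L_k)_{0j} \;=\; \frac{1}{mn}\sum_{\ell=0}^{m-1}\sum_{p=0}^{n-1} \varepsilon_{(p+1)(\ell+1)}\,\omega^{-k\ell}\,\tau^{-jp}.
\end{equation*}
To prove reality I would exploit the hypothesis (\ref{evenconjugate}) together with $\omega^{-k(m-\ell)} = \overline{\omega^{-k\ell}}$: pairing the indices $\ell$ and $m-\ell$ in the outer sum shows that each such pair contributes a complex conjugate pair to the double sum, while the ``self-conjugate'' indices ($\ell = 0$, and $\ell = m/2$ when $m$ is even) contribute real quantities because the corresponding columns of $E$ are themselves real.

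For nonnegativity I would isolate the Perron eigenvalue $\varepsilon_{11}$, which occurs in every $(L_k)_{0j}$ as a single term with positive coefficient $\tfrac{1}{mn}$, and rewrite
\begin{equation*}
mn\,(L_k)_{0j} \;=\; \varepsilon_{11} + \sum_{p=1}^{n-1}\varepsilon_{(p+1)1}\tau^{-jp} + \sum_{\ell=1}^{m-1}\omega^{-k\ell}\varepsilon_{1(\ell+1)} + \sum_{\ell=1}^{m-1}\sum_{p=1}^{n-1}\varepsilon_{(p+1)(\ell+1)}\,\omega^{-k\ell}\,\tau^{-jp}.
\end{equation*}
The hypothesis $\varepsilon_{11} \geq \Phi$ says precisely that $\varepsilon_{11}$ dominates the negative of the sum of the last three terms for every admissible $(k,j)$, whence $(L_k)_{0j} \geq 0$ for all $k$ and $j$. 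Each $L_k$ is therefore a nonnegative circulant matrix, and Theorem \ref{defLk} yields that every block $A(u,v) = \mathrm{circ}(q_u^T L_0^T q_v,\ldots, q_u^T L_{m-1}^T q_v)$ of $A$ is nonnegative circulant, completing the realization. I expect the reality verification to be the main technical hurdle, since it requires a careful pairing of conjugate columns of $E$ with the corresponding DFT exponentials and a case split on the parity of $m$; once reality is in hand, the nonnegativity step is a direct consequence of the quantitative bound built into $\Phi$.
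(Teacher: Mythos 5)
Your construction is essentially the paper's: build each $S_\ell$ as the circulant matrix whose eigenvalue list is the $(\ell+1)$-th column of $E$ via the inverse DFT of Corollary \ref{fund}, form $L_k=\tfrac1m\sum_{\ell}S_\ell\,\omega^{-k\ell}$, expand its circulant entries in the $\varepsilon_{ij}$, use the bound (\ref{new1}) to conclude these entries are nonnegative, and assemble $A$ through Theorem \ref{defLk} / Corollary \ref{converse2}, with the structural theorem of Section 4 giving the block circulant shape. Up to that point you reproduce the paper's argument faithfully.

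The one step where you depart from the paper --- deriving realness of the $L_k$ from (\ref{evenconjugate}) by pairing $\ell$ with $m-\ell$ --- does not work as sketched. The pairing hypothesis gives $S_{m-\ell}=S_\ell^{\ast}$ (conjugate \emph{transpose}: entrywise $s_j(m-\ell)=\overline{s_{n-j}(\ell)}$), so the paired contribution $S_\ell\omega^{-k\ell}+S_{m-\ell}\omega^{-k(m-\ell)}=M+M^{\ast}$ with $M=S_\ell\omega^{-k\ell}$ is Hermitian circulant, and its circulant entries $c_j+\overline{c_{n-j}}$ need not be real. The self-conjugate indices are also not as you describe: the first column of $E$ need not be real (only $S_0$ is real), and for even $m$ a real $(m/2+1)$-th column does not make $S_{m/2}$ real unless that column obeys the circulant-real symmetry $\lambda_{n-p}=\overline{\lambda_p}$ in the DFT ordering; e.g.\ for $n=3$, $m=2$ and second column $(1,2,3)$ the matrix $S_1$, hence $L_0$ and $L_1$, is genuinely complex. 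In fact realness of the entries $a_j(k)$ is not a consequence of the structural hypotheses at all (that would require a conjugate pairing in the \emph{row} index of $E$ as well); it is carried by the inequality (\ref{new1}) itself, which, read as intended, is exactly the family of real inequalities $0\le mn\,a_j(k)=\varepsilon_{11}+\sum_{p=1}^{n-1}\varepsilon_{(p+1)1}\tau^{-jp}+\cdots$ for all $j$ and $k$ (note also that the paper's $\Phi$ should be the maximum over both $j$ and $k$ of the negative of those sums, a sign/quantifier slip you silently correct). So drop the attempted reality derivation and instead state, as the paper implicitly does, that the hypothesis asserts $a_j(k)\ge0$ (hence real) for every $j,k$; with that reading your proof coincides with the paper's.
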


\begin{proof}
By the conditions of the statement there exists a nonnegative circulant matrix \[S_0:=circ\left(
s_{00},s_{10},\ldots ,s_{(n-1)0}\right) \] whose spectrum is $\left \{ Eq_{1} \right \}$ (the set of the entries in $Eq_1$).
The condition in (\ref{evenconjugate}) implies that for $\ell=1,\ldots,\left \lfloor \frac{m}{2} \right \rfloor$ the circulant matrices $S_{\ell+1}$ and $S_{m-\ell}$ whose spectrum are $\left \{ Eq_{(\ell+1)} \right \}$ and $\left \{ Eq_{(m-\ell+1)} \right \}$, respectively, are related by  $S_{\ell+1}^{\ast}=S_{m-\ell+1}$. For $\ell=1,\ldots, m-1$, suppose that
\begin{equation*}
S_{\ell}=circ\left( s\left( \ell\right) \right) \text{, with }s\left( \ell\right)
=\left( s_{0\ell},s_{1\ell},\ldots ,s_{(n-1)\ell}\right) ,
\end{equation*}%
where
\begin{equation}
s\left( \ell\right)^{T} =\frac{1}{\sqrt{n}}G^{\ast } Eq_{\ell} \label{circsl}.
\end{equation}

The entries of the $n$-by-$n$ nonnegative circulant matrices can be obtained, using equation (\ref{coefficients}) and the entries of the sums
\begin{equation}
L_k=\frac{1}{m}S_0+\frac{1}{m}\sum_{\ell =1}^{m-1}S_{\ell }\omega ^{-k\ell
}, \label{circlkp1}
\end{equation}
$k=0,1,\ldots, m-1.$
Recalling that the linear combination of circulant matrices are circulant, \cite{MAR}, then the matrices $L_k$ in (\ref{circlkp1})  are circulant. Suppose that $$L_k=circ\left(a_0(k),\ldots, a_{n-1}(k)\right).$$
From (\ref{circlkp1}), for $j=0,1,\ldots,n-1$ the following
holds:

\begin{eqnarray*}
a_j(k) &=&\frac{1}{m}\left( s_{j0}+\sum_{\ell =1}^{m-1}\omega ^{-k\ell
}s_{j\ell } \right).
\end{eqnarray*}
Using (\ref{circsl}), we have
\begin{eqnarray*}
s_{j\ell}&=&\frac{1}{n}[\varepsilon_{1(\ell+1)}+\sum\limits_{p=1}^{n-1}\varepsilon_{(p+1)(\ell+1)}\tau^{-jp}],
\end{eqnarray*}
for $j=0,1,\ldots,n-1.$

On the other hand, from the expression of $a_j(k)$ and its nonnegativity condition we can write:
\small{
\begin{eqnarray*}
0 \leq a_j(k)&=&\frac{1}{m}\left( s_{j0}+\sum_{\ell =1}^{m-1}\omega ^{-k\ell
}s_{j\ell } \right)\\
             & = &\small{\frac{1}{m}}\frac{1}{n}\left(
               [\varepsilon_{11}+\sum\limits_{p=1}^{n-1}\varepsilon_{(p+1)1}\tau^{-jp}]+\sum_{\ell =1}^{m-1}\omega ^{-k\ell}[\varepsilon_{1(\ell+1)}+\sum\limits_{p=1}^{n-1}\varepsilon_{(p+1)(\ell+1)}\tau^{-jp}]
%}
               \right) \\
             &=&\small{\frac{1}{mn}}\left(\varepsilon_{11}+\sum\limits_{p=1}^{n-1}\varepsilon_{(p+1)1}\tau^{-jp}+\sum_{\ell =1}^{m-1}\omega ^{-k\ell}\varepsilon_{1(\ell+1)}+\sum_{\ell =1}^{m-1}\sum\limits_{p=1}^{n-1}\varepsilon_{(p+1)(\ell+1)}\omega ^{-k\ell}\tau^{-jp}
%}
               \right)
\end{eqnarray*}
}
for all $k=0,1,\ldots, m-1.$
Therefore, the last condition implies the inequality in (\ref{new1}).
\end{proof}
Now, one can formulate the following question.
Under which conditions  the multiset $\left \{ E \right \}$ formed with the entries of an $n$-by-$m$ matrix $E=\left( \varepsilon _{ij}\right)$ as in Problem \ref{GUO2} is the spectrum of a nonnegative block matrix with circulant blocks.
%Which condition (or conditions) must hold if the %multiset $\left \{ E \right \}$ formed with the %entries of an $n$-by-$m$ matrix $E=\left( \varepsilon %_{ij}\right)$ as in Problem \ref{GUO2} is the %spectrum of a nonnegative by block matrix with %circulant blocks.
 Let us consider the set
\[
\boldsymbol{P}=\left\{ f:\left\{ E\right\} \rightarrow \left\{ E\right\} :f%
\text{ }is\ bijective\right\}
\]%
\begin{definition}
The function $f\in \boldsymbol{P}$ is said to be \textit{$E$-nonnegative spectrally stable ($E$-NNSS)} if
the matrix $E\left( f\right) =\left( f\left( \varepsilon _{ij}\right)
\right) $ has the same characteristic of the matrix $E$.
That is equivalent to say that $
f\left( \varepsilon _{11}\right) $ has the maximum absolute value among $%
f\left( \varepsilon _{ij}\right), $ the first column of $E\left( f\right) $
corresponds to the set of eigenvalues of a nonnegative circulant matrix, the
$\left( m-\ell +1 \right) $-th column of $E\left( f\right) $ is the complex
conjugate column of the $\left( \ell +1\right) $-th column  of  $E\left(
f\right) $, and then
\[
E\left( f\right) q_{(m-\ell +1) }=\overline{E\left( f\right) }q_{\left( \ell
+1\right) },
\]%
for $\ell=1,\ldots,\left \lfloor \frac{m}{2}  \right \rfloor.$
\end{definition}

For instance:

\begin{enumerate}

\item The identity function of $E$ into $E$ is clearly $E$-NNSS.
\item   $f:\left\{ E\right\} \rightarrow \left\{ E\right\} $ defined by
\[
f\left( \varepsilon _{ij}\right) =\left \{
\begin{array}{ll}
\varepsilon _{ij} & j\neq 2 \quad {\small \text{and}} \quad j\neq m, \\
\varepsilon_{i m } & j=2, \\
\varepsilon _{i2} & j=m;
\end{array}
\right.
\]
is $E$-NNSS.
\item  $f:\left\{ E\right\} \rightarrow \left\{ E\right\} $ defined by
\[
f\left( \varepsilon _{ij}\right) =\overline{\varepsilon }_{ij}\mbox{\, for all\,} i,j
\]
is $E$-NNSS.
\item  $f:\left\{ E\right\} \rightarrow \left\{ E\right\} $ defined by
\[
f\left( \varepsilon_{ij}\right) =
\left\{
\begin{array}{ll}
\overline{\varepsilon}_{ij} & \mbox{\, if \,} j=1,\\
\varepsilon_{ij} & \mbox{\,if \,} j\neq 1;
\end{array}
\right.
\]
is $E$-NNSS.
\end{enumerate}
We denote by $\boldsymbol{P}^{*}$ the subset of $\boldsymbol{P}$ formed by all $E$-NNSS bijections of $E$.
Note that if $f \in P^{*}$ then $f(\varepsilon_{11})=\varepsilon_{11}.$

\begin{theorem}
%\label{strongguo4}
Let $E=(\varepsilon_{ij})$ be an $n$-by-$m$ matrix such that the set formed by its first column is the spectrum of a nonnegative circulant matrix $S_0,$ the multiset $\left\{ E\right\} $ formed by the entries of $E$ is closed under complex conjugation, $\varepsilon_{11}$ is positive and has the largest absolute value among the absolute values of the entries of $E$, and for $\ell=1,\ldots,\left \lfloor \frac{m}{2} \right \rfloor$
\begin{eqnarray}
%\label{evenconjugate2}
Eq_{\ell+1}=\overline{Eq_{m-\ell +1}}.
\end{eqnarray}
The multiset $\left \{ E \right \}$ is the spectrum of a nonnegative block circulant matrix $A$ whose blocks are circulant if and only if

\begin{eqnarray}
\label{maria}
\varepsilon_{11} &\geq &
\min_{f\in \boldsymbol{P}^{*}}
\max_{k\in \{0, \ldots, m\}} \Theta,
\end{eqnarray}
where,
\begin{eqnarray*}
\Theta &  = &
\sum\limits_{p=1}^{n-1}f(\varepsilon_{(p+1)1})\tau^{-jp}+\sum_{\ell =1}^{m-1}\omega ^{-k\ell}f(\varepsilon_{1(\ell+1)})+\sum_{\ell =1}^{m-1}\sum\limits_{p=1}^{n-1}f(\varepsilon_{(p+1)(\ell+1)})\omega ^{-k\ell}\tau^{-jp}
\end{eqnarray*}

\end{theorem}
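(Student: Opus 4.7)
The plan is to use Theorem \ref{strongguo} as a black box applied to rearrangements $E(f)$ of $E$ by $E$-NNSS bijections. The sufficiency direction picks a minimiser $f$ and applies the theorem directly; the necessity direction extracts an appropriate $f\in\boldsymbol{P}^{*}$ from the block circulant decomposition of any realising matrix.

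For sufficiency, suppose (\ref{maria}) holds and fix a minimiser $f\in\boldsymbol{P}^{*}$. Because $f$ is $E$-NNSS, the matrix $E(f)=(f(\varepsilon_{ij}))$ inherits every structural hypothesis required by Theorem \ref{strongguo}: its first column is the spectrum of a nonnegative circulant matrix, $f(\varepsilon_{11})=\varepsilon_{11}$ is positive and of maximal modulus in $\{E(f)\}=\{E\}$, the multiset $\{E(f)\}$ is closed under conjugation, and the conjugate-column relation (\ref{evenconjugate}) holds. The assumption $\varepsilon_{11}\geq \max_{k}\Theta$ for this particular $f$ is exactly hypothesis (\ref{new1}) of Theorem \ref{strongguo} applied to $E(f)$, so that theorem delivers a nonnegative block circulant matrix with circulant blocks whose spectrum is $\{E(f)\}=\{E\}$.

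For necessity, suppose $A$ is such a realising matrix. By Theorem \ref{main}, $\sigma(A)=\bigcup_{k=0}^{m-1}\sigma(S_k)$, and by Item 2 of the structural theorem in Section 4 each $S_k$ is circulant of order $n$. Since $A$ is real, the inversion formula (\ref{summatrix}) forces $S_{m-\ell}=\overline{S_{\ell}}$, so the spectra of $S_{\ell}$ and $S_{m-\ell}$ are complex conjugate lists. The Perron root of $A$ is $\varepsilon_{11}$ and coincides with the Perron root of the nonnegative circulant $S_0$, so the eigenvalues of each $S_{\ell}$ can be listed as the $(\ell+1)$-th column of a rearranged matrix $E(f)$, with $\varepsilon_{11}$ kept in position $(1,1)$; this rearrangement defines a bijection $f\in\boldsymbol{P}^{*}$. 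Now I would reproduce the computation of the entries $a_{j}(k)$ of the blocks $L_k$ from the proof of Theorem \ref{strongguo}, but expressed in terms of $E(f)$. The equivalence $A\geq 0\Leftrightarrow L_k\geq 0$ for every $k$ (Remark \ref{remark1}) forces $a_j(k)\geq 0$ for every admissible pair $(j,k)$, which is precisely $\varepsilon_{11}\geq \max_{k}\Theta$ for this $f$. Taking the minimum over $\boldsymbol{P}^{*}$ gives (\ref{maria}).

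The main obstacle is the bookkeeping in the necessity direction: one must construct $f$ so that simultaneously (i) the Perron eigenvalue stays fixed in position $(1,1)$, (ii) the first column of $E(f)$ is a listing of $\sigma(S_0)$ (the spectrum of a nonnegative circulant matrix), and (iii) the conjugate-column relation $E(f)q_{\ell+1}=\overline{E(f)q_{m-\ell+1}}$ is honoured. Each of these is achievable separately using the freedom in ordering eigenvalues within any single $\sigma(S_k)$; the technical content is checking that they can be achieved at once, which is afforded precisely because the original hypotheses on $E$ already provide the matching conjugate pairing between columns.
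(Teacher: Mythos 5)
Your proposal is correct and follows essentially the same route as the paper: the paper's entire proof consists of rerunning the computation of Theorem~\ref{strongguo} with $\varepsilon_{ij}$ replaced by $f(\varepsilon_{ij})$ and then taking the minimum over $f\in\boldsymbol{P}^{*}$, which is precisely your sufficiency step together with the reversed computation you sketch for necessity. If anything, you make explicit what the paper leaves implicit, namely the converse direction in which the circulant matrices $S_k$ are recovered from a realizing matrix via Theorem~\ref{main}, the structure theorem of Section~4 and Remark~\ref{remark1}, and their spectra are repackaged as some $E(f)$ with $f\in\boldsymbol{P}^{*}$.
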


\begin{proof}
Following the same steps of the above proof this time replacing $\varepsilon_{ij}$ by $f(\varepsilon_{ij})$ we arrive at the inequality in (\ref{new1}). After taking the minimum when the function $f$ vary into $\boldsymbol{P}^{*}$, the inequality (\ref{maria}) is obtained.
\end{proof}

\begin{example}
    For
    \[
    E_{4}=%
    \begin{pmatrix}
    4 & -1+i & -1-i \\
    -1 & i & -i%
    \end{pmatrix}%
    \]%
    the matrices
    \begin{eqnarray*}
    S_{0} &=&%
    \begin{pmatrix}
    1.5 & 2.5 \\
    2.5 & 1.5%
    \end{pmatrix}
    \\
    S_{1} &=&%
    \begin{pmatrix}
    -0.5+i & -0.5 \\
    -0.5 & -0.5+i%
    \end{pmatrix}
    \\
    S_{2} &=&%
    \begin{pmatrix}
    -0.5-i & -0.5 \\
    -0.5 & -0.5-i%
    \end{pmatrix}%
    \end{eqnarray*}%
    are obtained. Thus
    \begin{eqnarray*}
    L_{0} &=&%
    \begin{pmatrix}
    0.1667 & 0.5 \\
    0.5 & 0.1667%
    \end{pmatrix}
    \\
    L_{1} &=&%
    \begin{pmatrix}
    1.2440 & 1 \\
    1 & 1.2440%
    \end{pmatrix}
    \\
    L_{2} &=&%
    \begin{pmatrix}
    0.0893 & 1 \\
    1 & 0.0893%
    \end{pmatrix}%
    \end{eqnarray*}%
    can be constructed. Now by dimishing the spectral radius and considering
    \[
    E_{3}=%
    \begin{pmatrix}
    3 & -1+i & -1-i \\
    -1 & -i & i%
    \end{pmatrix}%
    \]%
    the matrices
    \begin{eqnarray*}
    S_{0} &=&%
    \begin{pmatrix}
    1 & 2 \\
    2 & 1%
    \end{pmatrix}
    \\
    S_{1} &=&%
    \begin{pmatrix}
    -0.5 & -0.5+i \\
    -0.5+i & -0.5%
    \end{pmatrix}
    \\
    S_{2} &=&%
    \begin{pmatrix}
    -0.5 & -0.5-i \\
    -0.5-i & -0.5%
    \end{pmatrix}%
    \end{eqnarray*}%
     are yield. Thus
    \begin{eqnarray*}
    L_{0} &=&%
    \begin{pmatrix}
    0 & 0.3333 \\
    0.3333 & 0%
    \end{pmatrix}
    \\
    L_{1} &=&%
    \begin{pmatrix}
    0.5 & 1.4107 \\
    1.4107 & 0.5%
    \end{pmatrix}
    \\
    L_{2} &=&%
    \begin{pmatrix}
    0.5 & 0.2560 \\
    0.2560 & 0.5%
    \end{pmatrix}%
    \end{eqnarray*}%
    are  obtained.

Considering the multiset formed by the entries of $E_{3}, 3$ is the least Perron root that can be considered because the trace becomes negative if the spectral radius is diminished.
\end{example}

\textbf{Acknowledgments}.
Enide Andrade was supported in part by the Portuguese Foundation for Science and Technology (FCT-Funda\c{c}\~{a}o para a Ci\^{e}ncia e a Tecnologia), through CIDMA - Center for Research and Development in Mathematics and Applications, within project UID/MAT/04106/2013. Hans Nina is supported by the projects UA INI-17-02 and FONDECYT 11170389.
M. Robbiano was partially supported by project VRIDT UCN 170403003. M. Robbiano also thanks the support of CIDMA - Center for Research and Development in Mathematics and Applications, within project UID/MAT/04106/2013 and the hospitality of the Mathematics Department of the University of Aveiro, Portugal, where this research was initiated.

\end{document}